\numberwithin{equation}{section}
\newtheorem{theorem}{Theorem}[section]
\theoremstyle{definition}
\newtheorem{defn}[theorem]{Definitions}
\theoremstyle{remark}
\newtheorem{rem}[theorem]{Remark}
\numberwithin{equation}{section}
\definecolor{red}{rgb}{1.0, 0.0, 0.0}
\newcommand{\Bea}{\begin{eqnarray*}}
	\newcommand{\Eea}{\end{eqnarray*}}
\newcommand{\Be} {\begin{equation*}}
	\newcommand{\Ee} {\end{equation*}}
\newcommand{\be} {\begin{equation}}
	\newcommand{\ee} {\end{equation}}
\newcommand{\bea} {\begin{eqnarray}}
	\newcommand{\eea} {\end{eqnarray}}
\newcommand{\HH}{\mathbb{H}^n}
\newcounter{alphabet}
\newcommand{\bysame}{\leavevmode\hbox to3em{\hrulefill}\,}
\title[Sobolev spaces of negative order]
{Semilinear damped wave equations on the Heisenberg group  with initial  data from Sobolev spaces of negative order}
\author{Aparajita Dasgupta} 
\address{Aparajita Dasgupta \endgraf Department of Mathematics
	\endgraf Indian Institute of Technology  Delhi
	\endgraf Delhi, 110016  India.} 
\email{adasgupta@maths.iitd.ac.in}
\author{Vishvesh Kumar} 
\address{Vishvesh Kumar  \endgraf Department of Mathematics: Analysis, Logic and Discrete Mathematics
	\endgraf Ghent University
	\endgraf Krijgslaan 281, Building S8,	B 9000 Ghent,
	Belgium.} 
\email{Vishvesh.Kumar@UGent.be and vishveshmishra@gmail.com}
\author{Shyam Swarup Mondal} 
\address{Shyam Swarup Mondal   
	\endgraf Department of Mathematics: Analysis, Logic and Discrete Mathematics
	\endgraf Ghent University
	\endgraf Krijgslaan 281, Building S8,	B 9000 Ghent
	Belgium} 
\email{mondalshyam055@gmail.com}
\author{Michael Ruzhansky} 
\address{Michael Ruzhansky  \endgraf Department of Mathematics: Analysis, Logic and Discrete Mathematics
	\endgraf Ghent University
	\endgraf Krijgslaan 281, Building S8,	B 9000 Ghent
	Belgium
	\endgraf and
	\endgraf School of Mathematical Sciences
	\endgraf Queen Mary University of London, United Kingdom} 
\email{michael.ruzhansky@ugent.be}
\keywords{Heisenberg group, Semilinear damped wave equation,  Critical exponent,  Negative order Sobolev spaces,  Global existence, Finite blow-up} \subjclass[2020]{Primary 43A80,   35L71, 35A01; Secondary  35B33, 35B44, }
\date{\today}
\begin{document}
	\allowdisplaybreaks

	\begin{abstract} 
		
		In this paper, we focus on studying the Cauchy problem for semilinear damped wave equations  involving the sub-Laplacian $\mathcal{L}$ on the Heisenberg group $\HH$ with power type nonlinearity $|u|^p$  and  initial data taken from  Sobolev spaces of negative order
		homogeneous Sobolev space $\dot H^{-\gamma}_{\mathcal{L}}(\HH), \gamma>0$, on $\HH$. In particular,  in the framework of Sobolev spaces of negative order,  we prove that the critical exponent is the exponent   $p_{\text{crit}}(Q, \gamma)=1+\frac{4}{Q+2\gamma},$ for some  $\gamma\in (0, \frac{Q}{2})$,   where $Q:=2n+2$ is the homogeneous dimension of $\HH$. More precisely,  we establish  
		\begin{itemize}
			\item  a  global-in-time existence of small data Sobolev solutions of lower regularity for $p>p_{\text{crit}}(Q, \gamma)$  in the energy evolution space;
				\item a  finite time blow-up of weak solutions for $1<p<p_{\text{crit}}(Q, \gamma)$ under certain conditions on the initial data by using the test function method.
		\end{itemize}  
		Furthermore, to precisely characterize the blow-up time, we derive sharp upper bound and lower bound estimates for the lifespan in the subcritical case.

	\end{abstract}
	\maketitle
	\tableofcontents 
	\section{Introduction and discussion on main results}\label{sec1}
	\subsection{Description of problem and background}
	In this study, our main aim  is to  determine a new critical exponent for the Cauchy problem  for a semilinear damped wave equation with the power  type nonlinearities as follows:
	\begin{align} \label{eq0010}
		\begin{cases}
			u_{tt}-\mathcal{L}u +u_{t} =|u|^p, & g\in \mathbb{H}^n,~t>0,\\
			u(0,g)=\varepsilon u_0(g),  & g\in \mathbb{H}^n,\\ u_t(0, g)=\varepsilon u_1(g), & g\in \mathbb{H}^n,
		\end{cases}
	\end{align}
	where  $\mathcal{L}$ is the sub-Laplacian on the Heisenberg group $\HH$, $1<p<\infty$, and the initial data $(u_0, u_1)$ with its size parameter $\varepsilon>0$  belongs to subelliptic (or Folland-Stein) homogeneous Sobolev spaces of negative order $\left(u_0, u_1\right) \in  \dot {H}_\mathcal{L}^{-\gamma}(\HH) \times  \dot {H}_\mathcal{L}^{-\gamma}(\HH)$ with $\gamma>0$. In other words,  we study the global-in-time existence of small data solutions and the blow-up in finite time of solutions to the Cauchy problem (\ref{eq0010}).

	To discuss the classical Euclidean scenario, let us consider  the  semilinear   damped  wave equation   on $\mathbb{R}^n$    with the power  type nonlinearities as follows:   
	\begin{align} \label{eq38}
		\begin{cases}
			u_{tt}-\Delta u+	u_t=|u|^p,& x \in \mathbb{R}^n, ~t>0, \\
			u( 0,x)=u_0(x), & x \in \mathbb{R}^n,\\
			u_t( 0,x)=u_1(x), & x \in \mathbb{R}^n,
		\end{cases}
	\end{align}
	where $1<p<\infty$ and $\Delta$ is the Laplacian on $\mathbb{R}^n$.   When the initial data belongs additionally to $L^1$-space, the global existence or a  blow-up result to  (\ref{eq38}), depending on the critical exponent has been studied in \cite{IKeta and Tanizawa, Matsumura, Todorova, Zhang} and references therein.     The critical exponent signifies the threshold condition on the exponent $p $ for the global-in-time Sobolev solutions and the blow-up of local-in-time weak solutions with small data.   The critical exponent  for solutions to (\ref{eq38}) is  the so-called {\it Fujita exponent} given by $p_{\text{Fuji}}(n):=1+\frac{2}{n}$ (see \cite{IKeta and Tanizawa,Matsumura, Todorova,Zhang}). More precisely, 
	\begin{itemize}
		\item When $n=1, 2$, Matsumura  in his seminal paper \cite{Matsumura} proved the global-in-time existence of small data solutions for   $p>p_{\text {Fuji}}(n)$.  
		\item For any $n \geq 1$,  a  global existence for  $p>p_{\text {Fuji}}(n)$ (by assuming compactly supported initial data) and blow-up of the local-in-time solutions in the subcritical case $1<p<p_{\text {Fuji}}(n)$ was  explored  by  Todorova and Yordanov \cite{Todorova}. 
		\item  For $p=p_{\text {Fuji}}(n),$ the blow-up result  was obtained by Zhang \cite{Zhang}. 
	\end{itemize}    
	Later, Ikehata and Tanizawa \cite{IKeta and Tanizawa} removed the restriction of compactly supported data for  the supercritical case $p>p_{\text {Fuji}}(n)$. Moreover,   the sharp lifespan estimates for the Cauchy problem  \eqref{eq38} with additional $L^1$-data assumption are given by $$
	T_\varepsilon\left\{\begin{array}{ll}
		=\infty & \text { if } p>p_{\mathrm{Fuji}}(n), \\
		\simeq \exp \left(C \varepsilon^{-(p-1)}\right) & \text { if } p=p_{\mathrm{Fuji}}(n), \\
		\simeq C \varepsilon^{-\frac{2(p-1)}{2-n(p-1)}} & \text { if } p<p_{\mathrm{Fuji}}(n),
	\end{array}\right.
	$$
	where $C$ is a positive constant independent of $\varepsilon$. We cite \cite{Ikeda16,Ikeda19,Ikeda15, Lai19, Li1995} for a detailed study on the sharp lifespan estimates for the Cauchy problem  (\ref{eq38}). We also refer to the excellent book    \cite{Rei}  for the global-in-time small data solutions for the semilinear damped wave equations in the Euclidean framework.

	In recent years, considerable attention has been devoted by numerous researchers to finding new critical exponents for the classical semilinear damped wave equations on $\mathbb{R}^n$ in different contexts. For instance, considering the Cauchy problem (\ref{eq38}) with initial data additionally belonging to $L^m$-spaces with $m \in(1,2)$, the critical exponent is changed and the new modified Fujita exponent becomes  $$p_{\text {Fuji}}(n):=p_{\text {Fuji }}\left(\frac{n}{m}\right)=1+\frac{2 m}{n}.$$
	Unlike the     $L^1$-data case, here   with additional $L^m$-regular data,      the global-in-time  solution   exists  uniquely  at the critical point    $p_{\text {Fuji}}\left(\frac{n}{m}\right)=1+\frac{2 m}{n}$.  This is the main difference between   $L^1$ and $L^m$ regular data. We refer to \cite{Ikeda2019, Ikeda2002,Nakao93} and references therein  for a detailed study  related to the   critical exponent   $p_{\text {Fuji}}\left(\frac{n}{m}\right)$ for the solutions to  the semilinear wave equations with the  $L^m$-regular data. 
	
	The study of the semilinear damped wave equation has also been extended in the non-Euclidean framework. Several papers have studied nonlinear PDEs in non-Euclidean settings in the last decades.   	  For example, the semilinear wave equation with or without damping has been investigated for the   Heisenberg group in \cite{Vla, 24,30}.    In the case of graded groups, we refer to the recent works \cite{palmieri, gra1, 30, gra3}.  	We refer to \cite{garetto,27, 31, 28, BKM22, DKM23} concerning the  wave equation on compact Lie groups and \cite{AP,HWZ, AKR22,AZ} on a Riemannian symmetric space of non-compact type.   In particular, for  the  semilinear damped wave equation  on the Heisenberg group $\HH$, namely
	\begin{align} \label{eq00100}
		\begin{cases}
			u_{tt}-\mathcal{L}u +u_t =|u|^p, & g\in \mathbb{H}^n,~t>0,\\
			u(0,g)=u_0(g),  & g\in \mathbb{H}^n,\\ u_t(0,g)=u_1(g), & g\in \mathbb{H}^n,
		\end{cases}
	\end{align}
	where   $p>1$, $\mathcal{L}$ is the sub-Laplacian on the Heisenberg group $\HH$,    it was shown in \cite{Vla} that    the critical exponent is given by 
	\begin{align}\label{eq36}
		p_{\text{Fuji}}(Q):=1+\frac{2}{Q},
	\end{align}
	where $Q:=2n+2$ represents the homogeneous dimension of $\HH$. 
	It is interesting to note that (\ref{eq36})
	is also   the {\it Fujita exponent}  for the semilinear heat equation
	\begin{align*}
		\begin{cases}
			u_t-\mathcal{L}u  =|u|^p, & g\in \mathbb{H}^n,~t>0,\\
			u(0,g)=u_0(g),  & g\in \mathbb{H}^n,\\  
		\end{cases}
	\end{align*}
	on $\HH,$ where $p>1$. This topic has been discussed in \cite{Vla,palmieri,Nurgi}. However, in the case of a compact Lie group $\mathbb{G}$, it has  been shown in \cite{27} that $p_{\text {Fuji}}(0):=\infty$ is the critical exponent  for the solution to the semilinear {   damped} wave equation on $\mathbb{G}$. We cite \cite{30} for a global existence result with small data  in the more general setting of graded Lie groups for the semilinear damped wave equation involving a Rockland operator with an additional mass term.

	Recently, Chen and  Reissig \cite{Reissig} considered the following semilinear damped wave equation
	\begin{align} \label{eq40}
		\begin{cases}
			u_{tt}-\Delta u+	u_t= |u|^p,& x \in \mathbb{R}^n, ~t>0, \\
			u( 0, x)=u_0(x), & x \in \mathbb{R}^n,\\
			u_t( 0,x)=u_1(x), & x \in \mathbb{R}^n,
		\end{cases}
	\end{align}
	on $\mathbb{R}^n,$ where $p>1$    and the initial data additionally belonging to homogeneous Sobolev spaces of negative order  $\dot H^{-\gamma}(\mathbb{R}^n)$  with $\gamma>0$. They  obtained a new critical exponent $p = p_{\text{crit}}(n, \gamma) := 1+ \frac{4}{n+2\gamma}$ for some  $\gamma\in (0, \frac{n}{2})$  in this   framework. More specifically,   the authors  proved that:
	\begin{itemize}
		\item For $p>p_{\text{crit}}(n, \gamma) $,  the problem (\ref{eq40}) admits a global-in-time Sobolev solution for  sufficiently  small data   of lower regularity.
		\item For  $1<p<p_{\text{crit}}(n, \gamma) $,    the solutions to  (\ref{eq40})  blow-up in a finite time. In other words, there exists $T>0$ such that  the solution to  (\ref{eq40})  satisfies $\left\|u\left(\cdot, t_m\right)\right\|_{\infty} \rightarrow \infty$ as $t_m \rightarrow T$. 
	\end{itemize}   
	Further, 	   the authors also investigated   sharp lifespan estimates for weak solutions to (\ref{eq40}), in which the sharpness  of lifespan  estimates is given by
	$$
	T_\varepsilon\left\{\begin{array}{ll}
		=\infty & \text { if } p>p_{\text {crit }}(n, \gamma), \\
		\simeq C \varepsilon^{-\frac{2}{2 p^{\prime}-2-\frac{n}{2}-\gamma}} & \text { if } p<p_{\text {crit }}(n, \gamma),
	\end{array}\right.
	$$
	where $C$ is a positive constant independent of $\varepsilon$ and  $p^{\prime}$ is the Lebesgue exponent conjugate of $p$ such that $\frac{1}{p}+\frac{1}{p'}=1$. {   We remark that some other technical assumptions on $p$ and $\gamma$ are also required to find
	the sharp lifespan   estimate in the sub-critical case.}

	To the best of our knowledge, in  the non-Euclidean framework,  the subelliptic damped wave equation on the Heisenberg group with initial data localized in Sobolev spaces of negative order  has not been considered in the literature so far, even for the linear Cauchy problem. Therefore, an interesting and viable problem is to study several qualitative properties  such as global-in-time well-posedness, blow-up criterion, decay rate, asymptotic profiles to solutions for the subelliptic damped wave equations on the Heisenberg group $\HH$ with initial data  additionally belonging  to subelliptic Sobolev space $\dot {H}_\mathcal{L}^{-\gamma}(\HH)$ with $\gamma>0$.

	The main aim of this paper is to investigate and determine a critical exponent for the Cauchy problem for semilinear damped wave equation  (\ref{eq0010}) with the initial data additionally belonging to subelliptic homogeneous Sobolev spaces $  \dot  {H}_\mathcal{L}^{-\gamma}(\HH)$ of negative order {  $ -\gamma$.} More specifically, 
	\begin{itemize}
		\item  Under additional assumptions    for the initial data in $ \dot {H}_\mathcal{L}^{-\gamma}(\HH)$, we obtain  a new critical exponent to   (\ref{eq0010}) given by 
		\begin{align}\label{Critical-exponent}
			p_{\text {crit }}(Q, \gamma):=1+\frac{4}{Q+2 \gamma},
		\end{align}
		with $\gamma \in  {   \left(0,   \frac{\sqrt{Q^2+16 Q}-Q}{4}\right ). }$
		\item  We derive sharp lifespan estimates for weak solutions to the semilinear Cauchy problem (\ref{eq0010}). Define  	the lifespan  $T_\varepsilon$ as the maximal existence time for solution of (\ref{eq0010}), i.e.,
		\begin{align}\nonumber \label{eq42}
			T_\varepsilon :=\Big\{ T>0~&:~ \text{there exists a unique local-in-time solution to the Cauchy} \\& \text{  problem (\ref{eq0010}) on $[0, T)$   with a fixed parameter}~\varepsilon>0\Big\}.
		\end{align}
		If the initial data    is  from $\dot  {H}_\mathcal{L}^{-\gamma}(\HH)$ with {  $\gamma \in\left(0, \tilde{\gamma}\right)$} and the exponent   $p$ satisfies  $1+\frac{2 \gamma}{Q} \leq p \leq \frac{Q}{Q-2}$,  then the new sharp lifespan estimates are given by 
		$$
		T_\varepsilon\left\{\begin{array}{ll}
			=\infty & \text { if } p>p_{\text {crit }}(Q, \gamma), \\
			\simeq   C \varepsilon^{-\left(\frac{1}{p-1}-\left(\frac{Q}{4}+\frac{\gamma}{2}\right)\right)^{-1}} & \text { if } p<p_{\text {crit }}(Q, \gamma),
		\end{array}\right.
		$$
		where  the positive constant $C$ is independent of $\varepsilon.$
		
	\end{itemize}

	\subsection{Main results: Comprehensive review}
	Throughout the paper, we denote $L^{q}(\mathbb{H}^n), 1 \leq  q<\infty$, the space of $q$-integrable functions on $\HH$ with respect to  the Haar measure $dg$ on $\HH,$ which is nothing but the Lebesgue measure of $\mathbb{R}^{2n+1},$   and the space of all essentially bounded functions on $\HH$ for $q=\infty$.     The fractional subelliptic (Folland-Stein) Sobolev space $H_{\mathcal{L}}^s(\HH), s \in \mathbb{R}$, associated to the sub-Laplacian $\mathcal{L}$ on $\HH$, is defined as
	$$
	H_{\mathcal{L}}^s\left(\mathbb{H}^n\right):=\left\{f \in \mathcal{D}^{\prime}\left(\mathbb{H}^n\right):(I-\mathcal{L})^{s / 2} f \in L^2\left(\mathbb{H}^n\right)\right\},
	$$
	equipped  with the norm $$\|f\|_{H_{\mathcal{L}}^s\left(\mathbb{H}^n\right)}:=\left\|(I-\mathcal{L})^{s / 2} f\right\|_{L^2\left(\mathbb{H}^n\right)}.
	$$ 
	Similarly,  we denote by $ \dot{H}_{\mathcal{L}}^{ s}(\mathbb{H}^n),$ the  homogeneous Sobolev  defined as the space of all $f\in \mathcal{D}'(\HH)$ such that $(-\mathcal{L})^{{s}/{2}}f\in L^2(\HH)$. At times, we also write	$	H_{\mathcal{L}}^s$ and $	\dot{H}_{\mathcal{L}}^s$ for $H_{\mathcal{L}}^s\left(\mathbb{H}^n\right)$ and $\dot{H}_{\mathcal{L}}^s\left(\mathbb{H}^n\right),$ respectively from here on.

	For $(u_0, u_1)\in{  \mathcal{A}_{\mathcal{L}}^{s, -\gamma}}:= (H^s_{\mathcal{L}} \cap \dot {H}^{-\gamma}_\mathcal{L} ) \times (L^2 \cap \dot{H}^{-\gamma}_\mathcal{L} )$, we denote   $\left\|\left(u_0, u_1\right)\right\|_{{  \mathcal{A}_{\mathcal{L}}^{s, -\gamma}}}$ as 
	$$\left\|\left(u_0, u_1\right)\right\|_{{  \mathcal{A}_{\mathcal{L}}^{s, -\gamma}}}:=\|u_0\|_{H^s_{\mathcal{L}} \cap \dot{H}^{-\gamma}_\mathcal{L}}+\|u_1\|_{ L^2 \cap \dot {H}^{-\gamma}_\mathcal{L}  }.$$	Utilizing techniques derived from the  noncommutative Fourier analysis on the Heisenberg group $\HH$, we present our initial finding regarding time decay estimates in the $\dot{H}_\mathcal{L}^s$-norm of solutions to the homogeneous version of the Cauchy problem (\ref{eq0010}). This result is detailed below.  
	
	\begin{theorem}\label{eq0133} Let $\HH$ be the Heisenberg group with the homogeneous dimension $Q.$
		Let $\left(u_0, u_1\right) \in\left(H_\mathcal{L}^s \cap \dot {H}_\mathcal{L}^{-\gamma}\right) \times\left(H_\mathcal{L}^{s-1} \cap  \dot{H}_\mathcal{L}^{-\gamma}\right)$ such that  $s \geq 0$ and $\gamma \in \mathbb{R}$ such that $s+\gamma \geq  0$. 
		Consider  the following linear Cauchy problem   	\begin{align}\label{eq001000re}
			\begin{cases}
				u_{tt}-\mathcal{L}u+u_t =0, & g\in \mathbb{H}^n,t>0,\\
				u(0,g)=  u_0(g),  & g\in \mathbb{H}^n,\\ u_t(0,g)=  u_1(g), & g\in \mathbb{H}^n.
			\end{cases}
		\end{align}
		Then, the solution $u$ satisfies the following 
		$ \dot{H}_\mathcal{L}^s$-decay estimate	\begin{align}\label{hom}
			\|u(t, \cdot)\|_{ \dot {H}_\mathcal{L}^s} \lesssim(1+t)^{-\frac{s+\gamma}{2}}\left(\left\|u_0\right\|_{H^s_\mathcal{L} \cap\dot {H}^{-\gamma}_\mathcal{L}}+\left\|u_1\right\|_{H_\mathcal{L}^{s-1} \cap \dot{H}_\mathcal{L}^{-\gamma}}\right) ,
		\end{align}
		for any $t\geq 0$. 
	\end{theorem}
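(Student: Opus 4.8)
The plan is to reduce the $\dot H^s_{\mathcal L}$-estimate to a pointwise analysis on the group Fourier transform side, using the spectral/Plancherel theory of the sub-Laplacian $\mathcal L$ on $\HH$. Recall that $\mathcal L$ has, via the Schr\"odinger representations, a "symbol" that on each representation space acts like a scaled harmonic oscillator; its joint spectrum with the central variable $\lambda$ is parametrised by pairs $(|\lambda|, \mu)$ where $\mu = (2k+n)|\lambda|$, $k\in\Na_0$, runs over the Hermite-type eigenvalues, and Plancherel identifies $\|f\|_{L^2(\HH)}^2$ with an integral over $\lambda\in\R\setminus\{0\}$ and a sum over $k$ of the Hilbert--Schmidt norms of the matrix blocks $\widehat f(\lambda)$. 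Writing $\xi^2$ for the spectral variable of $\mathcal L$ (so $\xi^2 = (2k+n)|\lambda|$), the homogeneous Cauchy problem \eqref{eq001000re} becomes, blockwise, the ODE $\widehat u'' + \xi^2 \widehat u + \widehat u' = 0$ with data $\widehat u_0, \widehat u_1$, and the $\dot H^s_{\mathcal L}$ (resp. $\dot H^{-\gamma}_{\mathcal L}$) norms correspond to weights $\xi^{2s}$ (resp. $\xi^{-2\gamma}$) inside the Plancherel integral.

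The core of the proof is then the scalar ODE estimate: the solution is $\widehat u(t) = K_0(t,\xi)\widehat u_0 + K_1(t,\xi)\widehat u_1$ where the multipliers $K_0, K_1$ are explicit in terms of the characteristic roots $\lambda_\pm = \tfrac12(-1\pm\sqrt{1-4\xi^2})$. One splits the frequency range into the low-frequency zone $\{\xi\le \delta\}$, where the dominant root behaves like $\lambda_+ \sim -\xi^2$ giving a parabolic (heat-like) decay $e^{-c\xi^2 t}$, and the high-frequency zone $\{\xi\ge \delta\}$, where both roots have real part $-\tfrac12$ so one gets exponential decay $e^{-ct}$. In the low-frequency zone one uses the elementary bound $\xi^{2\alpha} e^{-c\xi^2 t}\lesssim (1+t)^{-\alpha}$ for $\alpha\ge 0$: taking $\alpha = s+\gamma$ one absorbs a factor $\xi^{-2\gamma}$ from the $\dot H^{-\gamma}_{\mathcal L}$-norm of the data against the weight $\xi^{2s}$ coming from the $\dot H^s_{\mathcal L}$-norm of the solution, which is where the decay rate $(1+t)^{-(s+\gamma)/2}$ is produced — this is why the hypothesis $s+\gamma\ge 0$ is needed, to keep the exponent of $\xi$ nonnegative. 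The derivative multiplier $K_1$ carries an extra $\xi^{-1}$-type gain at low frequencies, which is exactly compensated by measuring $u_1$ one derivative lower, in $H^{s-1}_{\mathcal L}$. In the high-frequency zone the exponential $e^{-ct}$ dominates any polynomial, and here one uses the non-homogeneous norms $H^s_{\mathcal L}$, $H^{s-1}_{\mathcal L}$ of the data (the inhomogeneous weight $\langle\xi\rangle^{2s}$ being comparable to $\xi^{2s}$ for $\xi\ge\delta$), together with $s\ge 0$.

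Concretely I would: (i) set up the Fourier/Plancherel framework on $\HH$ and record the reduction of $\|u(t,\cdot)\|_{\dot H^s_{\mathcal L}}^2$ to $\int\!\sum_k \xi^{2s}\,\|\widehat u(t,\lambda)\|_{HS,k}^2$; (ii) solve the block ODE and write down $K_0,K_1$ via $\lambda_\pm$; (iii) prove the two pointwise multiplier bounds $|K_0(t,\xi)|\lesssim (1+t)^{-(s+\gamma)/2}\,\xi^{-s}(\langle\xi\rangle^{-\gamma}+\langle\xi\rangle^{s})$-type estimates (splitting $\xi\lessgtr\delta$), and the analogous one for $K_1$ with the extra $\langle\xi\rangle^{-1}$ factor; (iv) insert these into the Plancherel integral, distributing the weight so that the pieces of the data are measured in $\dot H^{-\gamma}_{\mathcal L}$ at low frequency and in $H^s_{\mathcal L}$ (resp. $H^{s-1}_{\mathcal L}$) at high frequency; (v) take square roots and sum, obtaining \eqref{hom}. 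The main obstacle is step (iii): getting the dependence on $\xi$ and $t$ simultaneously sharp near the transition $\xi\approx 1/2$ where $\sqrt{1-4\xi^2}$ vanishes and the two roots coalesce, so that the representation of $K_0,K_1$ in terms of $\lambda_\pm$ degenerates; there one must pass to the bounded, smooth form of the solution operators (using $\frac{e^{\lambda_+ t}-e^{\lambda_- t}}{\lambda_+-\lambda_-}$ etc., which extends continuously across the double root) and check uniform bounds by hand on a neighbourhood of that point. Everything else is a careful but routine zone-by-zone estimation of elementary functions, and the final assembly is just Cauchy--Schwarz / Minkowski in the Plancherel integral.
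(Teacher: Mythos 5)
Your proposal follows essentially the same route as the paper: reduce via the group Fourier transform and Plancherel to the blockwise ODE in the spectral variable $\beta_{k,\lambda}^2=|\lambda|\mu_k$, write the solution through the characteristic roots, split into low/intermediate/high frequency zones, use $\xi^{2(s+\gamma)}e^{-c\xi^2 t}\lesssim(1+t)^{-(s+\gamma)}$ to trade the $\dot H^{-\gamma}_{\mathcal L}$ weight of the data for the decay rate at low frequencies, and use exponential decay with the inhomogeneous $H^s_{\mathcal L}$, $H^{s-1}_{\mathcal L}$ norms elsewhere. The only slip is that the $\xi^{-1}$ gain of the multiplier $K_1$ occurs at high frequencies (where the denominator $m_1-m_2\approx -2i|\beta_{k,\lambda}|$), not at low frequencies, but your later formulation with the factor $\langle\xi\rangle^{-1}$ and the final assembly are consistent with this, so the argument is sound.
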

	The next result is about the global-in-time    well-posedness of the   Cauchy problem   (\ref{eq0010})  in the energy evolution space  $\mathcal C\left([0,T],  H^s_{\mathcal{L}}\right) , s\in(0, 1]$.   In this case, a version of a Gagliardo-Nirenberg type inequality on $\HH$ (see Section \ref{sec2}) will play a crucial role in estimating the power type nonlinearity in $L^2(\mathbb{H}^n)$. Let us first make clear what do we mean by a solution of (\ref{eq0010}). For the global existence result we will work with the mild solutions of (\ref{eq0010}).  
	
	We say that a function  $u$ is  a {\it mild solution} to (\ref{eq0010})  on $[0, T]$ if $u$ is a fixed point for  the       integral operator  $N: u \in X_s(T) \mapsto N u(t, g) ,$ given  by 
	\begin{align}\label{f2inr} 
		N u(t, g):=u^{\text{lin}}(t, g) + u^{\text{non}}(t, g),
	\end{align}	in the energy evolution space $X_s(T) \doteq \mathcal{C}\left([0, T], H_{\mathcal{L}}^{s}(\mathbb{H}^n)\right), s\in (0, 1]$, 
	equipped with the norm
	\begin{align*} 
		\|u\|_{X_s(T)}&:=\sup\limits_{t\in[0,T]}\left ( (1+t)^{\frac{\gamma}{2}} \|u(t,\cdot)\|_{L^2}+(1+t)^{\frac{s+\gamma}{2}}\|u(t,\cdot)\|_{ \dot H^s_{\mathcal{L}}}\right ),
	\end{align*}
	with $\gamma>0,$
	where $$u^{\text{lin}}(t, g)=  u_{0}(g) *_{(g)}  E_{0}(t, g)+ u_{1}(g) *_{(g)}  E_{1}(t, g)$$ is the solution to the corresponding linear Cauchy problem (\ref{eq001000re})  and
	$$ u^{\text{non}}(t, g)= \int_{0}^{t}|u(s, g)|^{p} *_{(g)}  E_{1}(t-s, g) \;ds.$$
	Here $*_{(g)}$ is  the group convolution product on $\HH$ with respect to the $g$ variable and  $E_{0}(t, g)$ and $E_{1}(t, g)$  represent   the fundamental solutions to the homogeneous problem (\ref{eq001000re}) with initial data $\left(u_{0}, u_{1}\right)=\left(\delta_{0}, 0\right)$ and $\left(u_{0}, u_{1}\right)=$ $\left(0, \delta_{0}\right)$, respectively. 
	
	We prove the global-in-time existence and uniqueness of small data Sobolev solutions to the semilinear damped wave equation (\ref{eq0010}) of low regularity by finding a unique fixed point to the operator $N$, i.e., $Nu\in X_s(T)$ for all $T>0$.  It means that there exists a unique global solution $u$    to the equation $N u=u\in X_s(T)$ which also gives the solution to (\ref{eq0010}). 	In order to prove that $N$ has a uniquely determined fixed point, we use Banach's fixed point argument  {  on the space  $X_s(T)$ defined above.    }

	Keeping $p_{\text {crit }}(Q, \gamma) =1+\frac{4}{Q+2\gamma}$ in mind,  we state  the following global-in-time existence result.
	\begin{theorem}\label{well-posed}  
		Let $s \in(0,1]$ and $\gamma \in\left(0, \frac{Q}{2}\right)$.  Assume that the exponent $p$ satisfies 	$ 1<p \leq \frac{Q}{Q-2 s}$  and 
		\begin{align}\label{eq24}
			p\left\{\begin{array}{ll}
				>p_{\text {crit }}(Q, \gamma) & \text { if } \gamma \leq \tilde{\gamma}, \\
				\geq 1+\frac{2 \gamma}{Q} & \text { if } \gamma>\tilde{\gamma}, 
			\end{array}\right. 
		\end{align}
		where $\tilde{\gamma}$ denotes the positive root of  the quadratic equation $2 \tilde{\gamma}^2+Q \tilde{\gamma}-2 Q=0$. 
{   Then, there exists a small positive constant $\varepsilon_0$ such that for any $\left(u_0, u_1\right) \in \mathcal{A}_{\mathcal{L}}^{s,-\gamma}$ satisfying $\left\|\left(u_0, u_1\right)\right\|_{\mathcal{A}_{\mathcal{L}}^{s,-\gamma}}=\varepsilon \in\left(0, \varepsilon_0\right]$, the Cauchy problem for the semilinear damped wave equation (\ref{eq0010}) has a uniquely determined Sobolev solution
	$$
	u \in \mathcal{C}\left([0, \infty), H_{\mathcal{L}}^s\right).
	$$}
Therefore, the lifespan of the solution is given by $T_\varepsilon=\infty$.   Moreover, the solution satisfies the following two estimates listed below:
		$$
		\|u(t, \cdot)\|_{L^2}  \lesssim (1+t)^{-\frac{\gamma}{2}}\left\|\left(u_0, u_1\right)\right\|_{{  \mathcal{A}_{\mathcal{L}}^{s, -\gamma}}}, $$ 
		and
		$$
		\|u(t, \cdot)\|_{\dot H^s_{\mathcal{L}}}  \lesssim (1+t)^{-\frac{s+\gamma}{2}}\left\|\left(u_0, u_1\right)\right\|_{{  \mathcal{A}_{\mathcal{L}}^{s, -\gamma}}} .
		$$
	\end{theorem}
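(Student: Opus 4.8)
The plan is to prove Theorem~\ref{well-posed} via Banach's fixed point theorem applied to the integral operator $N$ on the complete metric space $X_s(T)$, uniformly in $T>0$, so that the fixed point is in fact global. First I would establish the two key estimates: for all $u, v \in X_s(T)$,
\begin{align*}
	\|Nu\|_{X_s(T)} &\lesssim \|(u_0,u_1)\|_{\mathcal{A}_{\mathcal{L}}^{s,-\gamma}} + \|u\|_{X_s(T)}^p,\\
	\|Nu - Nv\|_{X_s(T)} &\lesssim \|u-v\|_{X_s(T)}\left(\|u\|_{X_s(T)}^{p-1} + \|v\|_{X_s(T)}^{p-1}\right),
\end{align*}
with implied constants independent of $T$. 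Granting these, the standard argument shows that if $\varepsilon = \|(u_0,u_1)\|_{\mathcal{A}_{\mathcal{L}}^{s,-\gamma}}$ is small enough, $N$ maps a small closed ball of $X_s(T)$ into itself and is a contraction there; the unique fixed point $u$ satisfies $\|u\|_{X_s(T)} \lesssim \varepsilon$ with a bound uniform in $T$, which after letting $T \to \infty$ gives $u \in \mathcal{C}([0,\infty), H_{\mathcal{L}}^s)$ and the two asserted decay estimates, hence $T_\varepsilon = \infty$.

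The linear part $u^{\text{lin}}$ is controlled directly by Theorem~\ref{eq0133}: taking $s=0$ there gives the $L^2$ decay $(1+t)^{-\gamma/2}$ and taking the stated $s \in (0,1]$ gives the $\dot H^s_{\mathcal{L}}$ decay $(1+t)^{-(s+\gamma)/2}$, so that $\|u^{\text{lin}}\|_{X_s(T)} \lesssim \|(u_0,u_1)\|_{\mathcal{A}_{\mathcal{L}}^{s,-\gamma}}$; note that the hypotheses $s \geq 0$, $\gamma \in \mathbb{R}$, $s+\gamma \geq 0$ of Theorem~\ref{eq0133} are met. The nonlinear part requires estimating $\|\,|u(s,\cdot)|^p * E_1(t-s,\cdot)\,\|_{L^2}$ and $\|\,|u(s,\cdot)|^p * E_1(t-s,\cdot)\,\|_{\dot H^s_{\mathcal{L}}}$. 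Here I would split the time integral at $s = t/2$. On $[0,t/2]$ one uses the $L^1 \cap L^2 \to L^2$ (resp.\ $\dot H^s_{\mathcal{L}}$) smoothing of the propagator $E_1$ (an $L^1$-type decay estimate for the linear flow applied to the nonlinearity treated as data), which needs $\|\,|u(s,\cdot)|^p\,\|_{L^1} = \|u(s,\cdot)\|_{L^p}^p$ and $\|\,|u(s,\cdot)|^p\,\|_{L^2} = \|u(s,\cdot)\|_{L^{2p}}^{2p/2}$; on $[t/2,t]$ one uses the $L^2 \to L^2$ boundedness (with the gain of the $\dot H^{-\gamma}_{\mathcal{L}}$-type weight absent, so only $L^2 \to \dot H^s_{\mathcal{L}}$ costing time). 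In both regimes the $L^p$, $L^{2p}$ norms of $u(s,\cdot)$ are interpolated from $\|u(s,\cdot)\|_{L^2}$ and $\|u(s,\cdot)\|_{\dot H^s_{\mathcal{L}}}$ by the Gagliardo–Nirenberg inequality on $\HH$ quoted in Section~\ref{sec2}, which requires $1 < p \leq \frac{Q}{Q-2s}$ (exactly the stated range, ensuring $L^{2p}$ is reachable from $H^s_{\mathcal{L}}$), and then bounded by powers of $\|u\|_{X_s(T)}$ times explicit powers of $(1+s)$.

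The main obstacle is the time integrability: after inserting the Gagliardo–Nirenberg bounds and the propagator decay rates into the two pieces of $\int_0^t$, one obtains integrals of the form $\int_0^{t/2}(1+t-s)^{-a}(1+s)^{-b}\,ds$ and $\int_{t/2}^t (1+t-s)^{-c}(1+s)^{-d}\,ds$, and one must check that, after multiplying by the weight $(1+t)^{\gamma/2}$ or $(1+t)^{(s+\gamma)/2}$ from the $X_s(T)$-norm, the result stays bounded uniformly in $t$. This is precisely where the condition $p > p_{\text{crit}}(Q,\gamma) = 1 + \frac{4}{Q+2\gamma}$ (when $\gamma \leq \tilde\gamma$) or $p \geq 1 + \frac{2\gamma}{Q}$ (when $\gamma > \tilde\gamma$) enters: the first guarantees the exponent $b$ (coming from $p$ copies of the $L^2$-decay rate $\gamma/2$, plus the $\dot H^s$ contributions, plus the extra $4/(Q+2\gamma)$-type gain from the $\dot H^{-\gamma}_{\mathcal{L}}$ smoothing of $E_1$ on the nonlinearity) exceeds $1$, while the threshold $\tilde\gamma$ (root of $2\tilde\gamma^2 + Q\tilde\gamma - 2Q = 0$) is exactly where the competing constraint $p \geq 1 + 2\gamma/Q$ — needed so that the near-diagonal piece $\int_{t/2}^t$ and the low-regularity data estimate close — becomes the binding one. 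I would carry out this bookkeeping carefully, matching each decay exponent to the corresponding norm in Theorem~\ref{eq0133} and to the Gagliardo–Nirenberg exponent $\theta(p) = \frac{Q}{2s}\left(\frac{1}{2} - \frac{1}{2p}\right)$ or its analogue, and verify the borderline cases. The contraction estimate follows the same template using the elementary inequality $\big|\,|u|^p - |v|^p\,\big| \lesssim |u - v|\,(|u|^{p-1} + |v|^{p-1})$ together with Hölder, so it introduces no new difficulty beyond the one already resolved.
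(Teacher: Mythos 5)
Your overall architecture is the same as the paper's: Banach fixed point in $X_s(T)$ with constants uniform in $T$, the linear part controlled by Theorem \ref{eq0133}, the Duhamel integral split at $\sigma=t/2$ with the decay estimate used as a $(\,\cdot\,)\to L^2$ (resp.\ $\dot H^s_{\mathcal{L}}$) smoothing bound for $E_1$, Gagliardo--Nirenberg to interpolate the nonlinearity, and H\"older plus $\bigl||u|^p-|v|^p\bigr|\lesssim|u-v|(|u|^{p-1}+|v|^{p-1})$ for the contraction. However, there is a genuine gap in the one step that actually produces the critical exponent: on $[0,t/2]$ you propose to measure $|u(\sigma,\cdot)|^p$ in $L^1\cap L^2$ and invoke an ``$L^1\cap L^2\to L^2$'' smoothing estimate, needing $\||u(\sigma,\cdot)|^p\|_{L^1}=\|u(\sigma,\cdot)\|_{L^p}^p$. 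This fails for three reasons. First, no $L^1$-based linear estimate is available in this framework: Theorem \ref{eq0133} is formulated entirely with $\dot H^{-\gamma}_{\mathcal{L}}$ data, and the whole point of the paper is to replace the $L^1$ assumption by negative-order Sobolev regularity. Second, the Gagliardo--Nirenberg inequality of Section \ref{sec2} only reaches $L^q$ for $q\geq 2$, while the admissible range of the theorem contains exponents $p<2$ (e.g.\ $Q=4$, $\gamma>0$ gives $p_{\text{crit}}<2$), so $\|u\|_{L^p}$ is not controllable by $\|u\|_{L^2}$ and $\|u\|_{\dot H^s_{\mathcal{L}}}$. Third, even where it applies, the $L^1$ route yields the integrability threshold $p>1+\frac{4+Q-2\gamma}{Q+2\gamma}$, strictly above $p_{\text{crit}}(Q,\gamma)=1+\frac{4}{Q+2\gamma}$, so the stated range would not be covered.

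The missing ingredient is the Hardy--Littlewood--Sobolev inequality (Theorem \ref{eq177}): one must bound
\begin{equation*}
\bigl\||u(\sigma,\cdot)|^p\bigr\|_{\dot H^{-\gamma}_{\mathcal{L}}}\lesssim \bigl\||u(\sigma,\cdot)|^p\bigr\|_{L^{m}}=\|u(\sigma,\cdot)\|_{L^{mp}}^{p},\qquad \frac{1}{m}=\frac{1}{2}+\frac{\gamma}{Q},\ m=\tfrac{2Q}{Q+2\gamma}\in(1,2),
\end{equation*}
and then apply Gagliardo--Nirenberg at the exponent $mp\geq 2$; the requirement $\theta_2=\frac{Q}{s}\bigl(\frac12-\frac{1}{mp}\bigr)\geq 0$ is precisely where the side condition $p\geq 1+\frac{2\gamma}{Q}$ comes from (not from the near-diagonal piece, as you suggest), and the resulting decay $(1+\sigma)^{-p(\frac{\gamma}{2}+\frac{Q}{4})+\frac{\gamma}{2}+\frac{Q}{4}}$ of $\||u(\sigma,\cdot)|^p\|_{L^2\cap\dot H^{-\gamma}_{\mathcal{L}}}$ is exactly what makes $p>p_{\text{crit}}(Q,\gamma)$ the correct integrability threshold. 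Your later mention of a ``$\dot H^{-\gamma}_{\mathcal{L}}$ smoothing gain'' indicates you sense this, but it contradicts the concrete $L^1$ recipe you wrote down, and HLS --- the tool that makes the $\dot H^{-\gamma}$ norm of the nonlinearity computable --- is never invoked. Replacing the $[0,t/2]$ step accordingly (and keeping the $L^2\to L^2$/$L^2\to\dot H^s_{\mathcal{L}}$ bound on $[t/2,t]$, which you have right) turns the plan into the paper's proof.
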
 	
	It is crucial to emphasize that there is no loss in the decay of the solution when transitioning from the linear to the nonlinear problem. In other words, the decay rates outlined in the preceding theorem align precisely with the decay rates established for the corresponding linearized damped wave equation, as presented in Theorem \ref{eq0133}. However, the restriction $	1< p\leq \frac{Q}{Q-2s}$  in the above theorem is necessary in order to apply a Gagliardo-Nirenberg type inequality on $\HH$.	 	
	
	\begin{rem} Some examples for the admissible range of exponents    $p$ for the global-in-time existence result in the low dimensions Heisenberg group $\HH$ with $n=1$ and $2$, that is,   $Q=4$ and $6$, respectively,  are as follows: 
		\begin{itemize} 
			\item When $Q=4$, we take $s \in(0,1]$ and $\gamma \in (0, 2 )$ and the exponent satisfies
			
			$$
			\begin{array}{l} \vspace{0.3cm}
				1+\frac{2}{2+ \gamma}<p \leq  \frac{2}{2- s} ~\quad \text { if } ~0<\gamma \leq \tilde{\gamma}={   \sqrt{5}-1}, \\ 
				1+\frac{ \gamma}{2} \leq  p \leq \frac{2}{2- s} ~\quad\quad \text { if }  ~ {   \sqrt{5}-1}<\gamma<2.
			\end{array}
			$$
			\item When $Q=6$, we take $s \in(0,1]$ and $\gamma \in (0, 3)$ and the exponent satisfies
			
			$$
			\begin{array}{l} \vspace{0.3cm}
				1+\frac{2}{3+ \gamma}<p \leq  \frac{3}{3- s} ~\quad \text { if } ~0<\gamma \leq \tilde{\gamma}={   \frac{\sqrt{33}-3}{2}}, \\ 
				1+\frac{ \gamma}{3} \leq  p \leq \frac{3}{3- s} ~\quad\quad \text { if }  ~ {   \frac{\sqrt{33}-3}{2}}<\gamma<3 .
			\end{array}
			$$
	\end{itemize}
	Note that  the positive root $\tilde{\gamma}$ of $2 \tilde{\gamma}^2+Q \tilde{\gamma}-2 Q=0$ is always strictly less than 2 for any homogeneous dimension $Q$.   
\end{rem} 


Based on the aforementioned illustrations, it is clear that the global existence result, as stated in Theorem \ref{well-posed}, is only pertinent to specific lower homogeneous dimensions. This constraint is due to the technical stipulation that  $1 < p \leq \frac{Q}{Q-2s}$.  For global existence results in higher homogeneous dimensions, one can study Sobolev solutions by considering initial data from subelliptic  Sobolev spaces with an appropriate degree of higher/large regularity.

Our next result is about a    blow-up (in-time) result in the subcritical case to the Cauchy problem  (\ref{eq0010}) for certain values of $p$ regardless of the size of the initial data. Before the blow-up result, we first introduce a suitable notion of a weak solution to the   Cauchy problem (\ref{eq0010}).
\begin{defn}
	For any $T>0$, 	 a  weak solution of the Cauchy problem (\ref{eq0010}) in $[0, T) \times \HH$ is a function $u \in L_{\text {loc }}^p\left([0, T) \times \HH\right)$ that   satisfies  the following  integral relation:
	\begin{align}\label{2999}\nonumber
		&	\int_0^T \int_{\HH} u(t, g)\left(\partial_t^2 \phi(t, g)-\mathcal{L} \phi(t, g)-\partial_t \phi(t, g)\right) \;d g \;d t -\varepsilon\int_{\mathbb{H}^n} u_0(g) \phi(0, g) \;d g\\&-\varepsilon \int_{\mathbb{H}^n} u_1(g) \phi(0, g) \;d g +\varepsilon\int_{\HH} u_0(g) \partial_t \phi(0, g) \;d g =	\int_0^T \int_{\HH}|u(t, g)|^p \phi(t, g) \;d g \;d t,
	\end{align}
	for any $\phi \in \mathcal{C}_{0}^{\infty}([0, T) \times \HH)$. 
	If $T=\infty$, we call $u$ to be a global-in-time weak solution to (\ref{eq0010}), otherwise   $u$ is said to be a local in time weak solution to (\ref{eq0010}).
\end{defn}

{   Let  $|\cdot|$   be any homogeneous norm on the Heisenberg group  $\mathbb{H}^n$, while we denote $\left(1+|g|^2\right)^{\frac{1}{2}}$ by the Japanese bracket      $\langle g\rangle$ for $g \in \mathbb{H}^n$.}
Then, under some additional assumptions on the initial data, we have the following blow-up result.
\begin{theorem}\label{blow-up} Let $\gamma \in\left(0, \frac{Q}{2}\right)$ and  let the exponent $p$ satisfy  $1<p<p_{\text {crit }}(Q, \gamma)$. We   also assume  that the non-negative initial data $\left(u_0, u_1\right) \in \dot{H}_\mathcal{L}^{-\gamma} \times \dot{H}_\mathcal{L}^{-\gamma}$ satisfies 
	\begin{align}\label{eq32}
		u_0(g)+u_1(g) &\geq C_0 \langle g \rangle^{-Q\left(\frac{1}{2}+\frac{\gamma}{Q}\right)}(\log (e+|g|))^{-1}, \quad g\in \HH,
	\end{align}
	where $C_0>0$ is a fixed constant. Then, there is no global (in time) weak solution to the Cauchy problem (\ref{eq0010}).
	Moreover, the lifespan $T_{w,\varepsilon}$ of local (in time) weak solutions to the Cauchy problem  (\ref{eq0010}) satisfies the following upper bound estimate
	$$
{   T_{w, \varepsilon} \leq C \varepsilon^{-\left(\frac{1}{p-1}-\left(\frac{Q}{4}+\frac{\gamma}{2}\right)\right)^{-1}},}
	$$
 where  $C$ is a positive constant independent of $\varepsilon$.
	
\end{theorem}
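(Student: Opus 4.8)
The plan is to argue by contradiction with the rescaled test-function (capacity) method (cf.\ \cite{Zhang,Todorova,Reissig}), adapted to the anisotropic dilation structure of $\HH$. Suppose a global-in-time weak solution $u$ of \eqref{eq0010} exists. Since the damped wave flow is parabolic at large times (diffusion phenomenon), we test the weak identity \eqref{2999} against functions with parabolic scaling. Fix $\eta\in\mathcal C_0^\infty([0,\infty))$ with $\eta\equiv1$ on $[0,1]$, $\operatorname{supp}\eta\subset[0,2)$, $0\le\eta\le1$, and $\chi\in\mathcal C_0^\infty(\HH)$ with $\chi\equiv1$ on $\{|g|\le1\}$, $\operatorname{supp}\chi\subset\{|g|\le2\}$, $0\le\chi\le1$; writing $\delta_r$ for the dilation by $r>0$ on $\HH$, set, for $R\ge1$ and a fixed large exponent $\kappa=\kappa(p)$,
\[
\phi_R(t,g):=\eta(t/R^2)^{\kappa}\,\chi(\delta_{1/R}g)^{\kappa}.
\]
Since $\mathcal L$ is homogeneous of degree $2$ under $\delta_r$ and $\eta$ is constant near $t=0$, one has $\partial_t\phi_R(0,\cdot)\equiv0$, $|\partial_t^2\phi_R|\lesssim R^{-4}$, $|\partial_t\phi_R|+|\mathcal L\phi_R|\lesssim R^{-2}$, all supported in $\{t\le2R^2\}\times\{|g|\le2R\}$, a set of Haar measure $\lesssim R^{Q+2}$; and, because of the power $\kappa$, the crucial pointwise bound $|\partial_t^2\phi_R-\mathcal L\phi_R-\partial_t\phi_R|^{p'}\,\phi_R^{-p'/p}\lesssim R^{-2p'}$ holds on the support.

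Inserting $\phi_R$ into \eqref{2999}, using $u_0,u_1\ge0$ together with $\partial_t\phi_R(0,\cdot)\equiv0$, and setting $\mathcal I_R:=\int_0^\infty\!\int_{\HH}|u|^p\phi_R\,dg\,dt$, we obtain
\[
\mathcal I_R+\varepsilon\int_{\HH}(u_0+u_1)\,\phi_R(0,\cdot)\,dg=\int_0^\infty\!\!\int_{\HH}u\,(\partial_t^2\phi_R-\mathcal L\phi_R-\partial_t\phi_R)\,dg\,dt.
\]
Hölder's inequality with exponents $p,p'$ applied to the right-hand side, together with the bounds above, gives $\mathcal I_R+\varepsilon\int_{\HH}(u_0+u_1)\phi_R(0,\cdot)\,dg\lesssim\mathcal I_R^{1/p}\,R^{(Q+2)/p'-2}$; by Young's inequality the right-hand side is $\le\tfrac12\mathcal I_R+CR^{Q+2-2p'}$, so after absorbing and discarding the nonnegative $\mathcal I_R$ we reach the key inequality
\[
\varepsilon\int_{\HH}(u_0+u_1)\,\phi_R(0,\cdot)\,dg\lesssim R^{Q+2-2p'}.
\]

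The next step is to bound the data term from below. As $\phi_R(0,\cdot)\equiv1$ on $\{|g|\le R\}$ and $u_0+u_1\ge0$, the hypothesis \eqref{eq32}, together with $\gamma<Q/2$ and the polar coordinate formula on $\HH$, gives for large $R$
\[
\int_{\HH}(u_0+u_1)\,\phi_R(0,\cdot)\,dg\ \ge\ C_0\int_{\{|g|\le R\}}\langle g\rangle^{-\frac Q2-\gamma}\bigl(\log(e+|g|)\bigr)^{-1}\,dg\ \gtrsim\ \frac{R^{\frac Q2-\gamma}}{\log R};
\]
the exponent $Q\bigl(\tfrac12+\tfrac\gamma Q\bigr)=\tfrac Q2+\gamma$ with the logarithmic correction is exactly what keeps $u_0+u_1$ inside $\dot H_{\mathcal L}^{-\gamma}$ while making its tail as heavy as possible. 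Combining this with the key inequality yields $\varepsilon\lesssim R^{\frac Q2+2-2p'+\gamma}\log R$ for all large $R$. For $1<p<p_{\text{crit}}(Q,\gamma)$ the exponent satisfies $\tfrac Q2+2-2p'+\gamma<0$: this is equivalent to $p'>\tfrac Q4+1+\tfrac\gamma2$, i.e.\ $\tfrac1{p-1}>\tfrac{Q+2\gamma}{4}$, i.e.\ $p<1+\tfrac4{Q+2\gamma}$. Hence the right-hand side tends to $0$ as $R\to\infty$, forcing $\varepsilon=0$, a contradiction; therefore no global-in-time weak solution exists.

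For the lifespan bound one runs the same argument for a local weak solution on $[0,T_{w,\varepsilon})$, which only admits test functions with $2R^2<T_{w,\varepsilon}$; letting $R\uparrow(T_{w,\varepsilon}/2)^{1/2}$ in $\varepsilon\lesssim R^{\frac Q2+2-2p'+\gamma}\log R$ and solving for $T_{w,\varepsilon}$ produces $T_{w,\varepsilon}\le C\varepsilon^{-(\frac1{p-1}-(\frac Q4+\frac\gamma2))^{-1}}$, the logarithmic factor being of lower order. The step I expect to require the most care is the design and calibration of the test function on $\HH$: one must respect the degree-$2$ homogeneity of $\mathcal L$ and the $R^{Q}$ growth of Haar measure under $\delta_R$, and --- most delicately --- tune the spatial weight to the decay in \eqref{eq32} so that the data integral grows like $R^{Q/2-\gamma}$ up to logarithms; it is precisely the competition of this growth against the universal test-function gain $R^{Q+2-2p'}$ that singles out $p_{\text{crit}}(Q,\gamma)=1+\tfrac4{Q+2\gamma}$. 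The remaining steps (Hölder, Young, absorption, and justifying the integrations via $u\in L^p_{\mathrm{loc}}$ and the local finiteness of the nonnegative data) are routine once the scaling is fixed.
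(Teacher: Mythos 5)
Your proposal is correct and follows essentially the same route as the paper: the parabolically scaled test-function method, with the identical Hölder--Young absorption giving the bound $R^{Q+2-2p'}$, the identical lower bound $R^{Q/2-\gamma}(\log R)^{-1}$ for the data term via polar coordinates, and the same exponent arithmetic and choice $R\uparrow T_{w,\varepsilon}^{1/2}$ for the lifespan. The only (interchangeable) implementation differences are that you use a single dilated cutoff raised to a large power $\kappa$ and make $\eta$ constant near $t=0$ to kill the $\partial_t\phi(0,\cdot)$ term, whereas the paper uses separate-variable bumps with $|\partial\alpha|\lesssim\alpha^{1/p}$ and a sign condition on $\beta'$; both versions also gloss over the same harmless logarithmic factor when converting the contradiction inequality into the stated lifespan bound.
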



From  Theorems \ref{well-posed} and Theorem \ref{blow-up}, we can conclude that the critical exponent for the semilinear damped wave equation (\ref{eq0010})   is 	$p_{\text {crit }}(Q, \gamma):=1+\frac{4}{Q+2 \gamma}$ (see (\ref{Critical-exponent})) when initial data are additionally taken from the negative order Sobolev space  $ \dot{H}_\mathcal{L}^{-\gamma}, \gamma>0$. It gives us a new way to look at the critical exponent for the semilinear damped wave equation in Sobolev space with a negative order.  
 {   Indeed, we can interpret the exponent $p_{\text {crit }}(Q, \gamma)$  as a modification of the exponent $p_{\text {Fujita }}\left(\frac{Q}{2}\right)=1+\frac{4}{Q}$ (which would correspond to the critical exponent with only $L^2$ regularity for the Cauchy data) by working with even weaker regularity ($H_{\mathcal{L}}^{-\gamma}$ regularity), the critical exponent becomes even smaller $p_{\text {crit }}(Q, \gamma)=1+\frac{4}{Q+2 \gamma}$.
 	 Since,  we are working with less regularity for the data, the decay rates in the estimates for the solutions of the homogeneous problem improve and the Banach fixed point theorem holds for a larger range of $p$.}

The behavior of the  {  Cauchy problem} at the critical case $p=p_{\text {crit }}(Q, \gamma) $  remains uncertain, as it is unclear whether  a  global (in time)  small data Sobolev solutions exist or the weak solutions will blow-up.  

In the next remark, particularly,	we consider  $s=1$ for  Theorem \ref{well-posed} and \ref{blow-up} to give a rough idea about  the critical exponent.
\begin{rem}
	Due to the  applications of   Gagliardo-Nirenberg inequality, we have   the technical restriction on the exponent   $1<p \leq \frac{Q}{Q-2}$. Consequently, the ranges of exponent $p$ for global-in-time existence and blow-up are as follows.
	\begin{itemize}
		\item 	When $Q=4,6$ 
		\begin{itemize}
			\item blow-up of weak solutions if $1<p<p_{\text {crit }}(Q, \gamma)$, 
			\item and global-in-time existence of Sobolev solutions if $$p_{\text {crit }}(Q, \gamma)<p \leq \frac{Q}{Q-2}, \qquad   \text{for}~0<\gamma \leq \tilde{\gamma},$$ as well as $$1+\frac{2 \gamma}{Q} \leq p \leq \frac{Q}{Q-2}, \qquad \text{for}~\tilde{\gamma}<\gamma<\frac{Q}{2}.$$
		\end{itemize} 
		\item When $Q\geq 8$, from Theorem \ref{well-posed}, the set of admissible exponents  $p$   is completely empty.
	\end{itemize}
\end{rem}
\begin{figure}[b]
	\centering
	\begin{tikzpicture}[>=latex,xscale=2.35,yscale=1.75,scale=0.82]
		\draw[->] (5.8,0) -- (9,0) node[below] {$\gamma$};
		\draw[->] (6,-0.2) -- (6,3.4) node[left] {$p$};
		\node[left] at (6,-0.2) {{$0$}};
		\node[left] at (6,2.7) {{$\frac{Q}{Q-2}$}};
		\node[below] at (7,0) {{${\tilde{\gamma}}$}};
		\node[below] at (7.9,0) {{$\frac{Q}{2}$}};
		\node[left] at (6,1) {{$1$}};
		\node[left, color=black] at (9.138,1.8) {{$\longleftarrow$ $p=1+\frac{2\gamma}{Q}$}};
		\node[right, color=black] at (4.44,2) {{ $p=p_{\mathrm{crit}}(Q,\gamma)$$\longrightarrow$}};
		\draw[dashed, color=black]  (6, 1)--(8, 1);
		\draw[dashed, color=black]  (8, 0)--(8, 3.4);
		\draw[dashed, color=black]  (6, 2.7)--(8, 2.7);
		\draw[dashed, color=black]  (7, 0)--(7, 1.5);
		\draw[color=black] plot[smooth, tension=.7] coordinates {(6,2.4) (6.3,1.9) (7,1.5)};
		\draw[dashed, color=black] plot[smooth, tension=.7] coordinates {(6.2,3.2) (6.4,2.3) (7,1.5)};
		\node[below] at (6.3,3.2) {{$\nearrow$}};
		\node[below] at (7.5,3.2) {{$\nwarrow$}};
		\draw[color=black] plot[smooth, tension=.7] coordinates { (6,1) (8,2)};
		\draw[dashed, color=black] plot[smooth, tension=.7] coordinates { (7,1.5) (7.7,3.3)};
		\draw[dashed, color=black] plot[smooth, tension=.7] coordinates { (7,1.5) (7.5,1.4) (8,1.3)};
		\fill [pattern=north west lines, pattern color=red]  (6,2.4) -- (6.3,1.9) -- (7,1.5) -- (7.5,1.4) --(8,1.3) -- (8,1) -- (6, 1);
		\fill [pattern=north east lines, pattern color=green]  (6,2.4) -- (6.3,1.9) -- (7,1.5) -- (8,2) -- (8,2.7) -- (6,2.7);
	\end{tikzpicture}
	
	\begin{center}

		\begin{tikzpicture}[>=latex,xscale=1.35,yscale=1.25,scale=0.82]	\node[left] at (9.2,-1) {{For  $Q=4,6$}};	\end{tikzpicture}\\
		\begin{tikzpicture}[>=latex,xscale=1.35,yscale=1.25,scale=0.82]	\node[left] at (4.81,4.1) {{\color{green} $\square$ Range for global existence of solution}};\fill [pattern=north east lines, pattern color=green]  (-1.51,4) -- (-1.51, 4.3) -- (-1.81,4.3) -- (-1.81, 4);	\end{tikzpicture}\\
		\begin{tikzpicture}[>=latex,xscale=1.35,yscale=1.25,scale=0.82]	\node[left] at (4.8,4.1) {{   $\square$ Range for blow-up of solution}}; \fill [pattern=north west lines, pattern color=red]   (-0.34,4) -- (-0.34, 4.3) -- (-0.64,4.3) -- (-0.64, 4); \end{tikzpicture}
	\end{center}
	\caption{Description of the critical exponent in the $(\gamma, p)$ plane}
	\label{imgg2}
\end{figure}
For a detailed analysis of the critical exponent which depends on the parameter $\gamma$ and $Q$, we  can   describe   it by the $(\gamma, p) $ plane in Figure \ref{imgg2}. 	In  Figure \ref{imgg2},  with  an  increase of the homogeneous dimension $Q$ of $\mathbb{H}^n$, the curve $p=p_{\text {crit }}(Q, \gamma)$ and the segment $p=1+\frac{2 \gamma}{Q}$ will move following the direction of   $\nearrow$ and $\nwarrow$ lines arrows, respectively. 

The formulations of Theorem \ref{well-posed} and Theorem \ref{blow-up} for $s=1$ also give the critical index for regularity of initial data that belongs additionally to $ \dot{H}_\mathcal{L}^{-\gamma}$ with $0<\gamma<\tilde{\gamma} $. Specifically, for $1+\frac{2}{Q}<p<1+\frac{4}{Q}$, the critical index is given by $\gamma_{\text {crit }}(p, Q):=\frac{2}{p-1}-\frac{Q}{2}$. Regarding the initial data that also  additionally belongs to $ \dot{H}_\mathcal{L}^{-\gamma}$ with $\gamma >0$,
\begin{itemize}
	\item  then local (in-time) weak solutions in general blow up in finite time for $0<\gamma<\gamma_{\text {crit }}(p, Q)$;
	\item then the global (in-time) small data Sobolev solutions exists uniquely for $\gamma_{\text {crit }}(p, Q)<\gamma<\min  \{\tilde{\gamma}, \frac{Q}{2} \}$.
\end{itemize}    

From Theorem \ref{blow-up},    for  $1 < p < p_{\text {crit }}(Q, \gamma)$, we know that   the   non-trivial local (in time) weak solution   blow up in finite time   and we have the  following  upper bound estimates for the lifespan
$$T_\varepsilon\leq 	T_{w,\varepsilon} \leq C \varepsilon^{-\left(\frac{1}{p-1}-\left(\frac{Q}{4}+\frac{\gamma}{2}\right)\right)^{-1}}.$$
Now, it is important to investigate the lower bound estimates for the lifespan.  

To thoroughly examine the lifespan, it is essential to consider the precise definition of mild solutions to the Cauchy problem (\ref{eq0010}) on $[0,T)$ with $T > 0$ for $u \in  C([0,T),H_\mathcal{L}^1)$. Let   $T_{m,\varepsilon}$ be the lifespan of a mild solution $u $. Then, we have the following result regarding the lower bound for the lifespan.
\begin{theorem}\label{lower bound}  
	Let $\gamma \in (0, \tilde{\gamma} ) $ and let the  exponent $p$ satisfy $1<p<p_{\text {crit }}(Q, \gamma)$ such that
	\begin{align}\label{eq35}
		1+\frac{2 \gamma}{Q} \leq p \leq \frac{Q}{Q-2 }.
	\end{align}
	We also   assume that  $\left(u_0, u_1\right) \in \mathcal{A}_1^{\mathcal{L}}$. Then, there exists a constant $\varepsilon_0$ such that for every $\varepsilon \in\left(0, \varepsilon_0 \right]$, the lifespan $T_{m, \varepsilon}$ of  mild solutions  $u$ to the Cauchy problem (\ref{eq0010}) satisfies the following lower bound condition:
	$$
	T_{m,\varepsilon} \geq  D \varepsilon^{-\left(\frac{1}{p-1}-\left(\frac{Q}{4}+\frac{\gamma}{2}\right)\right)^{-1}},
	$$
	where $D$ is a positive constant independent of $\varepsilon$, but  may  depends on $p, Q, \gamma$ as well as $\left\|\left(u_0, u_1\right)\right\|_{\mathcal{A}_1^{\mathcal{L}}}$.
\end{theorem}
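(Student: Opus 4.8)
The plan is to run the same Banach fixed point scheme that proves Theorem \ref{well-posed}, but now on a \emph{bounded} time interval $[0,T]$ and to track precisely how the constants depend on $T$. Fix $s=1$, work in $X_1(T)=\mathcal{C}([0,T],H^1_{\mathcal{L}})$ with the weighted norm $\|\cdot\|_{X_1(T)}$ introduced above, and consider the integral operator $N u=u^{\text{lin}}+u^{\text{non}}$ of \eqref{f2inr} with the $\varepsilon$-rescaled data $\varepsilon u_0,\varepsilon u_1$. The linear part is uniform in $T$: Theorem \ref{eq0133}, applied with $s=1$ and with $s=0$ (recall $\dot H^0_{\mathcal{L}}=L^2$), gives $\|u^{\text{lin}}\|_{X_1(T)}\lesssim \varepsilon\,\|(u_0,u_1)\|_{\mathcal{A}_1^{\mathcal{L}}}$ with an absolute constant. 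The heart of the matter is therefore to establish, in the subcritical range, the nonlinear estimates
\begin{align*}
\|u^{\text{non}}\|_{X_1(T)}&\lesssim (1+T)^{\sigma}\,\|u\|_{X_1(T)}^{p},\\
\|N u-N v\|_{X_1(T)}&\lesssim (1+T)^{\sigma}\big(\|u\|_{X_1(T)}^{p-1}+\|v\|_{X_1(T)}^{p-1}\big)\,\|u-v\|_{X_1(T)},
\end{align*}
with
\[
\sigma:=1-(p-1)\Big(\tfrac{Q}{4}+\tfrac{\gamma}{2}\Big),
\]
which is \emph{strictly positive} exactly because $p<p_{\text{crit}}(Q,\gamma)=1+\tfrac{4}{Q+2\gamma}$. (When $p>p_{\text{crit}}(Q,\gamma)$ one has $\sigma<0$, so there is no restriction on $T$ and one recovers Theorem \ref{well-posed}.)

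To obtain these estimates I would argue as in the proof of Theorem \ref{well-posed}. First estimate the nonlinearity at a fixed time: by the Gagliardo--Nirenberg inequality on $\HH$ (Section \ref{sec2}) one controls $\|u(\tau,\cdot)\|_{L^{2p}}$ and $\|u(\tau,\cdot)\|_{L^{mp}}$, where $\tfrac1m=\tfrac12+\tfrac{\gamma}{Q}$, by $\|u(\tau,\cdot)\|_{L^2}$ and $\|u(\tau,\cdot)\|_{\dot H^1_{\mathcal{L}}}$; together with the embedding $L^{m}(\HH)\hookrightarrow\dot H^{-\gamma}_{\mathcal{L}}(\HH)$ this bounds $\big\||u(\tau,\cdot)|^p\big\|_{L^2\cap\dot H^{-\gamma}_{\mathcal{L}}}$, into which we insert the time weights $\|u(\tau,\cdot)\|_{L^2}\lesssim(1+\tau)^{-\gamma/2}\|u\|_{X_1(T)}$ and $\|u(\tau,\cdot)\|_{\dot H^1_{\mathcal{L}}}\lesssim(1+\tau)^{-(1+\gamma)/2}\|u\|_{X_1(T)}$. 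Plugging this into $u^{\text{non}}(t,\cdot)=\int_0^t|u(\tau,\cdot)|^p*_{(g)}E_1(t-\tau,\cdot)\,d\tau$, using the $L^2$- and $\dot H^1_{\mathcal{L}}$-decay estimates for convolution with $E_1$ that underlie Theorems \ref{eq0133} and \ref{well-posed}, and splitting $\int_0^t=\int_0^{t/2}+\int_{t/2}^t$, one finds that in the subcritical regime the time integral is no longer uniformly bounded: the piece over $[t/2,t]$ always generates a factor $\simeq(1+t)^{\sigma}$, and the piece over $[0,t/2]$ is likewise $\lesssim(1+t)^{\sigma}$. Here the technical restrictions are exactly what makes the computation clean: the bounds $1+\frac{2\gamma}{Q}\le p\le\frac{Q}{Q-2}$ keep the Gagliardo--Nirenberg interpolation exponents for $\|u(\tau,\cdot)\|_{L^{2p}}$ and $\|u(\tau,\cdot)\|_{L^{mp}}$ in $[0,1]$ (the lower bound forces $mp\ge2$, the upper bound forces $2p\le\frac{2Q}{Q-2}$), while $\gamma\in(0,\tilde{\gamma})$, with $\tilde{\gamma}$ the positive root of $2\tilde{\gamma}^2+Q\tilde{\gamma}-2Q=0$, keeps the resulting exponents of $(1+\tau)$ on the correct side of the integrability threshold, so that the $\tau$-integral produces precisely the polynomial growth $(1+t)^{\sigma}$.

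With the two nonlinear estimates in hand, close the argument by the usual contraction scheme on the ball $B_R=\{u\in X_1(T):\|u\|_{X_1(T)}\le R\}$ with $R:=2C\varepsilon\|(u_0,u_1)\|_{\mathcal{A}_1^{\mathcal{L}}}$: the self-mapping property $N(B_R)\subset B_R$ and the contraction property both reduce to a single smallness condition of the form $(1+T)^{\sigma}\varepsilon^{p-1}\le c_0$ for a suitable small $c_0>0$. Hence, after fixing $\varepsilon_0$ small, for every $\varepsilon\in(0,\varepsilon_0]$ and every $T$ with $1+T\le c_0^{1/\sigma}\varepsilon^{-(p-1)/\sigma}$ the operator $N$ has a unique fixed point in $B_R$, i.e. a mild solution of \eqref{eq0010} belonging to $\mathcal{C}([0,T],H^1_{\mathcal{L}})$; by standard uniqueness these solutions are restrictions of one another, so $T_{m,\varepsilon}\ge D\,\varepsilon^{-(p-1)/\sigma}$. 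Since
\[
\frac{p-1}{\sigma}=\frac{p-1}{1-(p-1)\big(\tfrac{Q}{4}+\tfrac{\gamma}{2}\big)}=\Big(\tfrac{1}{p-1}-\big(\tfrac{Q}{4}+\tfrac{\gamma}{2}\big)\Big)^{-1},
\]
this is exactly the claimed lower bound; moreover it has the same $\varepsilon$-order as the upper bound for $T_{w,\varepsilon}$ in Theorem \ref{blow-up}, so the two results together pin down the lifespan in the subcritical case.

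The main obstacle is the sharp $T$-dependent nonlinear estimate with the \emph{exact} exponent $\sigma=1-(p-1)(\tfrac{Q}{4}+\tfrac{\gamma}{2})$. Two ingredients need care: (i) decay estimates for convolution with the fundamental solution $E_1$ on $\HH$ that are refined enough to exploit the additional $\dot H^{-\gamma}_{\mathcal{L}}$-regularity of the data and of the nonlinearity --- these come from the group Fourier analysis and Plancherel theorem on the Heisenberg group used to prove Theorem \ref{eq0133}; and (ii) an honest bookkeeping of all Gagliardo--Nirenberg exponents, so as to verify that the restrictions on $p$ and on $\gamma$ are precisely the ones producing the $(1+t)^{\sigma}$ growth and nothing worse. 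The remaining points --- continuity in $t$ of the fixed point, so that it indeed lies in $\mathcal{C}([0,T),H^1_{\mathcal{L}})$, and the identification of the constructed solution with the maximal mild solution defining $T_{m,\varepsilon}$ --- are routine and handled exactly as in the proof of Theorem \ref{well-posed}.
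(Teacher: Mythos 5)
Your proposal is correct and matches the paper's argument in its essential content: both rest on rerunning the Section~\ref{sec4} estimates on $[0,T]$ to obtain the nonlinear bound with the exact growth factor $(1+T)^{\sigma}$, $\sigma=1-(p-1)\left(\frac{Q}{4}+\frac{\gamma}{2}\right)$, which the paper denotes $\alpha(p,Q,\gamma)$ and which is positive precisely because $p<p_{\text{crit}}(Q,\gamma)$, with the same restrictions \eqref{eq35} coming from the Gagliardo--Nirenberg and Hardy--Littlewood--Sobolev steps. The only difference is in the final packaging: you construct the mild solution directly by contraction on the ball of radius $\simeq\varepsilon$ for every $T$ with $(1+T)^{\sigma}\varepsilon^{p-1}\le c_0$, whereas the paper derives the a priori inequality $\|u\|_{X_1(T)}\le C\varepsilon+D(1+T)^{\alpha(p,Q,\gamma)}\|u\|_{X_1(T)}^{p}$ and extracts the same threshold by a continuity (bootstrap) argument on $T^{*}=\sup\{T<T_{m,\varepsilon}:\|u\|_{X_1(T)}\le M\varepsilon\}$ --- both routes give the identical lower bound $T_{m,\varepsilon}\ge D\,\varepsilon^{-(p-1)/\sigma}$.
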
 

Since $u$ in Theorem \ref{well-posed} is a mild solution to (\ref{eq0010}),   this mild solution is also a weak solution to (\ref{eq0010})  {  (according to a density argument)} with timespan $ T_{m,\varepsilon}\leq  T_\varepsilon$.  Once more drawing on Theorems \ref{blow-up} and \ref{lower bound}, for  $1<p<p_{\text {crit }}(Q, \gamma)$, we claim a sharp estimate for the lifespan $T_\varepsilon$  as stated below
$$
T_\varepsilon \simeq C \varepsilon^{-\left(\frac{1}{p-1}-\left(\frac{Q}{4}+\frac{\gamma}{2}\right)\right)^{-1}},
$$
for $\gamma \in (0, \min  \{2, \frac{Q}{2} \} )$. Moreover, the lifespan estimate mentioned above agrees with the one for additional $L^1$-regular data when particularly we consider $\gamma=\frac{Q}{2}$.

\begin{rem}
	One interesting observation is to discuss the acceptable ranges for $p$ so that we can get sharp lifespan estimates.  This can be summed up as follows:
	\begin{itemize}
		\item	when $Q=4$, for $1+\frac{2 \gamma}{Q} \leq  p<p_{\text {crit }}(Q, \gamma)$ and $0<\gamma \leq  \frac{-Q+\sqrt{Q^2+16 Q}}{4}$,  we can achieve sharp lifespan estimates;\vspace{0.1cm}
		\item 	When $Q=6$, for $1+\frac{2 \gamma}{Q} \leq p<p_{\text {crit }}(Q, \gamma)$ with $p \leq \frac{Q}{Q-2}$ and $0<\gamma \leq  \frac{-Q+\sqrt{Q^2+16 Q}}{4}$, sharp lifespan estimates can be achieved;\vspace{0.1cm}
		\item 	when $Q \geq 8$, for $1+\frac{2 \gamma}{Q} \leq  p \leq \frac{Q}{Q-2}$ and $0<\gamma \leq \frac{Q}{Q-2}$, we can achieve sharp lifespan estimates.
	\end{itemize}

\end{rem} 

We conclude the introduction with a brief outline of the organization of the paper. 	
In Section \ref{sec1}, we discuss the main results and their explanations related to  the critical exponent  $p_{\text{crit}}(Q, \gamma)$  and    sharp lifespan estimates for weak solutions  to Cauchy problem (\ref{eq0010}) when  initial data taken additionally from $ \dot {H}_\mathcal{L}^{-\gamma}$. Section \ref{sec2} is devoted to recalling some basics of the Fourier analysis on the Heisenberg  group $\mathbb{H}^n$ to make the paper self-contained.  Using  the  Fourier analysis on the Heisenberg group $\HH$,  we derive $\dot H^s_{\mathcal{L}}(\HH)$-decay estimates  for the solution to  the linear  damped wave equation (\ref{eq0010}) with vanishing right-hand side in Section \ref{sec3}. Using the $\dot H^s_{\mathcal{L}}(\HH)$-decay estimates  for the solution to a linear   damped wave equation, we demonstrate global-in-time well-posedness for the semilinear Cauchy problem (\ref{eq0010}) if $p>p_{\text {crit }}(Q, \gamma)$  with the help of Banach's fixed point argument in Section \ref{sec4}.  In order to estimate  the nonlinear term in  $\dot{H}_\mathcal{L}^{-\gamma}(\HH)$, we use   Sobolev inequality and the   Gagliardo-Nirenberg inequality on the Heisenberg group $\HH$.  In Section \ref{sec5}, by employing the test function method, we  conclude the optimality by showing the blow-up of weak solutions even for small data for the range  $1<p<p_{\text {crit }}(Q, \gamma)$.  As a byproduct, we also obtain upper bound estimates for the lifespan.  We conclude our paper by deriving the sharp lower bound estimates for the lifespan of mild solutions in Section \ref{sec6} by using the method of contradiction.

\section{Preliminaries: Analysis on  the Heisenberg group $\mathbb{H}^n$} \label{sec2}
In this section, we recall some basics of the Fourier analysis on the Heisenberg    groups $\mathbb{H}^n$ to make the manuscript self-contained. A complete account of the representation theory on $\mathbb{H}^n$ can be found in \cite{Vla, thanga, 30, Fischer,palmieri,Poho}. However, we mainly adopt the notation and terminology given in \cite{Fischer} for convenience.
We commence this section by establishing the notations that will be employed consistently throughout the paper.
\subsection{Notations} 
Throughout the article,  we use the following notations:  

\begin{itemize}
	\item $f \lesssim g:$\,\, There exists a positive constant $C$ (whose value may change from line to line in this manuscript) such that $f \leq C g.$  
	\item  $f \simeq g$: Means that $f \lesssim g$ and $g \lesssim f$. 
	\item $\mathbb{H}^n:$  The Heisenberg  group.
	\item $Q$:  The homogeneous dimension of $\HH$.
	\item $dg:$ The Haar measure on the Heisenberg  group $\HH.$
	\item $\mathcal{L}:$ The sub-Laplacian  on $\HH.$
	\item  ${H}_\mathcal{L}^{-\gamma}$: The subelliptic Sobolev spaces of negative order  with $\gamma>0$ on $\HH$.
\end{itemize}

\subsection{The Hermite operator} In this  subsection, we recall some  definitions and properties of Hermite functions which we will use frequently in order to study Schr\"odinger representations and sub-Laplacian on  the Heisenberg group $\HH$.  We start with the definition of Hermite polynomials on $\mathbb{R}.$

Let $H_k$ denote the Hermite polynomial on $\mathbb{R}$, defined by
$$H_k(x)=(-1)^k \frac{d^k}{dx^k}(e^{-x^2} )e^{x^2}, \quad k=0, 1, 2, \cdots   ,$$\vspace{.30mm}
and $h_k$ denote the normalized Hermite functions on $\mathbb{R}$ defined by
$$h_k(x)=(2^k\sqrt{\pi} k!)^{-\frac{1}{2}} H_k(x)e^{-\frac{1}{2}x^2}, \quad k=0, 1, 2, \cdots,$$\vspace{.30mm}
The Hermite functions $\{h_k \}$ are the eigenfunctions of the Hermite operator (or the one-dimensional harmonic oscillator) $H=-\frac{d^2}{dx^2}+x^2$ with eigenvalues $2k+1,  k=0, 1, 2, \cdots$. These functions form an orthonormal basis for $L^2(\mathbb{R})$. The higher dimensional Hermite functions denoted by $e_{k}$ are then obtained by taking tensor products of one dimensional Hermite functions. Thus for any multi-index $k=(k_1, \cdots, k_n) \in \mathbb{N}_0^n$ and $x =(x_1, \cdots, x_n)\in \mathbb{R}^n$, we define
$e_{k}(x)=\prod_{j=1}^{n}h_{k_j}(x_j).$
The family $\{e_{k}\}_{k\in \mathbb{N}^n_0}$ is then   an orthonormal basis for $L^2(\mathbb{R}^n)$. They are eigenfunctions of the Hermite operator $\mathrm{H}=-\Delta+|x|^2$, namely, we have an ordered set of  {  natural  numbers} $\{\mu_k\}_{k \in \mathbb{N}_0^n}$ such that 
$$\mathrm{H} e_{k}(x)=\mu_ke_{k}(x), \quad x\in \mathbb{R}^n,$$
for all $k\in \mathbb{N}_0^n$ and $x\in \mathbb{R}^n$. More precisely, $\mathrm{H}$  has eigenvalues 
$$\mu_k=\sum_{j=1}^{n}(2k_j+1)=2|k|+n,$$
corresponding to the eigenfunction $e_{k}$ for $k\in \mathbb{N}_0^n.$

{   Given $f \in L^{2}(\mathbb{R}^{n})$, we have the Hermite expansion
$$f=\sum_{k \in \mathbb{N}_0^{n}}\left(f, e_{k}\right) e_{k}=\sum_{m=0}^\infty \sum_{|k|=m}\left(f, e_{k}\right) e_{k}= \sum_{m=0}^\infty P_mf,$$ where $P_{m}$ denotes the orthogonal projection of $L^{2}(\mathbb{R}^{n})$ onto the eigenspace spanned by $\{e_{k}:|k|=m\}.$  
Then the spectral decomposition of $H$ on $\mathbb{R}^n$ is given by
$$
Hf=\sum_{m=0}^{+\infty}(2 m+n) P_mf,  
$$
  Since 0 is not in the spectrum of $H$,  for any $s \in \mathbb{R}$, we can define the fractional powers $H^s$  by means of the spectral theorem, namely
\begin{align}\label{Hermite power}
    H^s f=\sum_{m=0}^{\infty}(2 m+n)^s P_m f.
\end{align}
 
}

\subsection{The Heisenberg group}
One of the simplest examples of a non-commutative and non-compact group is the famous Heisenberg group $\mathbb{H}^n$.  The theory of the Heisenberg group   plays a crucial  role in several branches of mathematics and physics. The Heisenberg group $\mathbb{H}^n$  is a nilpotent Lie group whose underlying manifold is $ \mathbb{R}^{2n+1} $ and the group operation is defined by
$$(x, y, t)\circ (x', y', t')=(x+x', y+y', t+t'+ \frac{1}{2}(xy'-x'y)),$$
where $(x, y, t)$, $ (x', y', t')$ are in $\mathbb{R}^n \times \mathbb{R}^n \times \mathbb{R}$ and  $xy'$ denotes the standard scalar product in $\mathbb{R}^n$. Moreover, $\mathbb{H}^n$ is a unimodular Lie group on which the left-invariant Haar measure $dg$ is the usual Lebesgue measure $\, dx \, dy \,dt.$ 

The canonical basis for the Lie algebra $\mathfrak{h}_n$ of $\mathbb{H}^n$ is given by the left-invariant vector fields:
\begin{align}\label{CH00VF}
	X_j&=\partial _{x_j}-\frac{y_j}{2} \partial _{t}, &Y_j=\partial _{y_j}+\frac{x_j}{2} \partial _{t}, \quad j=1, 2, \dotsc   n,   ~ \mathrm{and} ~ T=\partial _{t},
\end{align} \vspace{.30mm}
which 	satisfy the commutator relations
$[X_{i}, Y_{j}]=\delta_{ij}T, \quad  \text{for} ~i, j=1, 2, \dotsc n.$

Moreover, the canonical basis for   $\mathfrak{h}_n$  admits the  decomposition  $\mathfrak{h}_n=V\oplus W,$ where $V=\operatorname{span}\{X_j, Y_j\}_{j=1}^n$ and $W=\operatorname{span} \{T\}$.   Thus, the Heisenberg group $\HH$ is a step 2 stratified Lie group and its  homogeneous dimension is $Q:=2n +2$.
The sublaplacian $\mathcal{L}$ on $\HH$ is   defined  as
\begin{align*}
	\mathcal{L} &=\sum_{j=1}^{n}(X_j^2+Y_j^2)=\sum_{j=1}^{n}\bigg(\bigg(\partial _{x_j}-\frac{y_j}{2} \partial _{t}\bigg)^2+\bigg(\partial _{y_j}+\frac{x_j}{2} \partial _{t}\bigg)^2\bigg)\\& =\Delta_{\mathbb{R}^{2n}}+\frac{1}{4}(|x|^2+|y|^2) \partial_t^2+\sum_{j=1}^n\left(x_j \partial_{y_j t}^2-y_j \partial_{x_j t}^2\right),
\end{align*}
where $\Delta_{\mathbb{R}^{2n}}$ is the standard Laplacian on $\mathbb{R}^{2n}.$

\subsection{Fourier analysis on  the Heisenberg group $\mathbb{H}^n$} We start this subsection by  recalling the definition of the operator-valued group Fourier transform on $\mathbb{H}^n$.  By Stone-von Neumann theorem, the only infinite-dimensional unitary irreducible representations
(up to unitary equivalence) are given by $\pi_{\lambda}$, $\lambda$ in $\mathbb{R}^*$, where the mapping  $\pi_{\lambda}$  is a strongly continuous unitary representation defined by 
$$\pi_{\lambda}(g)f(u)=e^{i \lambda(t+\frac{1}{2}xy)} e^{i \sqrt{\lambda}yu} {  f(u+\sqrt{|\lambda|}x)}, \quad    g=(x, y, t)\in \mathbb{H}^n,$$ for all  $ f  \in L^2(\mathbb{R}^n).$ We use the convention $$\sqrt{\lambda}:={\rm sgn}(\lambda)\sqrt{|\lambda|}
= \begin{cases}  \sqrt{\lambda},&\lambda>0,\\ -\sqrt{|\lambda|}, &\lambda<0.\end{cases}
$$
For each ${\lambda} \in \mathbb{R}^*$, the group Fourier transform of $f\in L^1(\mathbb{H}^n)$ is a bounded linear operator   on  $L^2(\mathbb{R}^n)$ defined by
\begin{align*}
	\widehat{f}(\lambda) \equiv \pi_{\lambda}(f)=\int_{\mathbb{H}^n}f(g)\pi_{\lambda}^*(g)\, dg.
\end{align*}
Let   $B(L^2(\mathbb{R}^n))$  be the set of all bounded operators on $L^2(\mathbb{R}^n)$. As the Schr\"odinger representations are unitary,  for any  {  $\lambda\in \mathbb{R}^*$}, we have  
$$\|\widehat{f}\|_{B(L^2(\mathbb{R}^n))}\leq \|f\|_{L^1(\HH)}.$$
If $f \in L^2(\mathbb{H}^n)$, then $\widehat{f}(\lambda)$ is a Hilbert-Schmidt operator on $L^2(\mathbb{R}^n)$ and satisfies the following Plancherel formula $$\|{f}\|_{L^2(\mathbb{H}^n)}^2=\int_{\mathbb{R}^*}\|\widehat{f}(\lambda)\|_{S_2}^{2} \, d\mu( \lambda),$$
where $\| . \|_{S_2}$ stands for  the norm in the Hilbert space $S_2$,   the set of all Hilbert-Schmidt operators  on  $L^2(\mathbb{R}^n)$ and $d\mu(\lambda)=c_n {|\lambda|}^n \, d\lambda$ with $c_n$ being  a positive constant.

Therefore, with the help of the orthonormal basis $\left\{e_k\right\}_{k \in \mathbb{N}_0^n}$ for  $L^2\left(\mathbb{R}^n\right)$ and  the definition of the Hilbert-Schmidt norm, we have 
$$
\begin{aligned}
	\|\widehat{f}(\lambda)\|_{S_2}^2 & \doteq \operatorname{Tr}\left[  (\pi_\lambda(f) )^* \pi_\lambda(f)\right] \\
	& =\sum_{k \in \mathbb{N}_0^n} \|\widehat{f}(\lambda) e_k \|_{L^2\left(\mathbb{R}^n\right)}^2=\sum_{k, \ell \in \mathbb{N}^n_0} | (\widehat{f}(\lambda) e_k, e_{\ell} )_{L^2\left(\mathbb{R}^n\right)} |^2.
\end{aligned}
$$
The above expression allows us to write the Plancherel formula  in the following way:
$$\|{f}\|_{L^2(\mathbb{H}^n)}^2=\int_{\mathbb{R}^*}\sum_{k, \ell \in \mathbb{N}^n_0} | (\widehat{f}(\lambda) e_k, e_{\ell} )_{L^2\left(\mathbb{R}^n\right)} |^2 \, d\mu( \lambda).$$
For  $f \in \mathcal{S}(\mathbb{H}^n)$, the space of all Schwartz class functions on $\HH,$ the Fourier inversion formula  takes the form
\begin{align*}
	f(g) &:=  \int_{\mathbb{R}^*}\operatorname{Tr}[\pi_{\lambda}(g)\widehat{f}(\lambda)]\,d\mu(\lambda), \quad   g\in \mathbb{H}^n,
\end{align*}
where $\operatorname{Tr(A)}$ denotes the trace  of the operator $A$.

Furthermore, the action of the infinitesimal representation $d\pi_\lambda$ of $\pi_\lambda$ on the generators of the first layer of the Lie algebra $\mathfrak{h}_n$ is given by
$$
\begin{aligned}
	\;d \pi_\lambda\left(X_j\right) & =\sqrt{|\lambda|} \partial_{x_j} & \text { for } j=1, \cdots, n, \\
	\mathrm{~d} \pi_\lambda\left(Y_j\right) & =i \operatorname{sign}(\lambda) \sqrt{|\lambda|} x_j & \text { for } j=1, \cdots, n .
\end{aligned}
$$
Since the action of $\;d \pi_\lambda$ can be extended to the universal enveloping algebra of $\mathfrak{h}_n$, combining  the above two expressions,  we obtain
$$
\;d \pi_\lambda\left(\mathcal{L}\right)=\;d \pi_\lambda\bigg(\sum_{j=1}^n(X_j^2+Y_j^2)\bigg)=|\lambda| \sum_{j=1}^n\big(\partial_{x_j}^2-x_j^2\big)=-|\lambda| \mathrm{H},
$$
where $\mathrm{H} =-\Delta+|x|^2$ is the Hermite operator on $\mathbb{R}^n$. Thus the operator valued symbol $\sigma_\mathcal{L}(\lambda)$ of $\mathcal{L}$ acting on $L^2(\mathbb{R}^n)$  takes the form
$\sigma_\mathcal{L}(\lambda)=-|\lambda| \mathrm{H}.$ {  Furthermore, for $s \in \mathbb{R}$, using  the functional calculus, the     symbol of $(-\mathcal{L})^s $ is 
$|\lambda|^s \mathrm{H}^s,$ where the notion of $\mathrm{H}^s$ is defined in (\ref{Hermite power}).
}

The Sobolev spaces $H_{\mathcal{L}}^s, s \in \mathbb{R}$, associated to the sublaplacian $\mathcal{L}$, are defined as
$$
H_{\mathcal{L}}^s\left(\mathbb{H}^n\right):=\left\{f \in \mathcal{D}^{\prime}\left(\mathbb{H}^n\right):(I-\mathcal{L})^{s / 2} f \in L^2\left(\mathbb{H}^n\right)\right\},
$$
with the norm $$\|f\|_{H_{\mathcal{L}}^s\left(\mathbb{H}^n\right)}:=\left\|(I-\mathcal{L})^{s / 2} f\right\|_{L^2\left(\mathbb{H}^n\right)}.$$ 
Similarly,  we denote by $ \dot{H}_{\mathcal{L}}^{ s}(\mathbb{H}^n),$ the  homogeneous Sobolev  defined as the space of all $f\in \mathcal{D}'(\HH)$ such that $(-\mathcal{L})^{{s}/{2}}f\in L^2(\HH)$. More generally, 	we define  $ \dot{H}_{\mathcal{L}}^{p, s}(\mathbb{H}^n)$ as the  homogeneous Sobolev  defined as the space of all  $f\in \mathcal{D}'(\HH)$ such that $(-\mathcal{L})^{{s}/{2}}f\in L^p(\HH)$ for $s <\frac{Q}{p}$. 	Then we recall the following important inequalities, see e.g. \cite{30,Fischer,DKR23, GKR, GKR2} for  a more general graded Lie group framework. However, we will state those in the Heisenberg group setting.

\begin{theorem}[Hardy-Littlewood-Sobolev inequality]\label{eq177} Let  $\HH$ be the Heisenberg group with the homogeneous dimension $Q:=2n+2$.  Let $a\geq 0$ and $1<p\leq q<\infty$ be such that 	$$	\frac{a}{Q}= \frac{1}{p}-\frac{1}{q}.$$ Then we have   the following inequality $$ \|f\|_{\dot{H}_{\mathcal{L}}^{q, -a}(\HH)} \lesssim \|f\|_{L^p(\HH)}. 	$$
\end{theorem}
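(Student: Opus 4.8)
\emph{Proof idea.} The plan is to reduce the inequality to a mapping property of the subelliptic Riesz potential $(-\mathcal{L})^{-a/2}$. If $a=0$ there is nothing to prove, so assume $0<a<Q$; since $q<\infty$, the relation $\frac{a}{Q}=\frac1p-\frac1q$ forces $a<\frac{Q}{p}$, which is exactly the range in which $\dot H_{\mathcal{L}}^{q,-a}(\HH)$ and the operator $(-\mathcal{L})^{-a/2}$ acting on $L^p(\HH)$ make sense, and the claimed bound is literally $\|(-\mathcal{L})^{-a/2}f\|_{L^q(\HH)}\lesssim\|f\|_{L^p(\HH)}$. Because $(-\mathcal{L})^{-a/2}$ commutes with left translations, it acts by right group convolution with a tempered distribution $\mathcal{I}_a$ on $\HH$; thus it suffices to prove
\[
\big\| f*_{(g)}\mathcal{I}_a \big\|_{L^q(\HH)}\lesssim \|f\|_{L^p(\HH)}\qquad\text{for }f\in\mathcal{S}(\HH),
\]
and then conclude for general $f\in L^p(\HH)$ by density.

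Next I would identify $\mathcal{I}_a$ via subordination to the heat semigroup. Writing $e^{t\mathcal{L}}$ for the semigroup generated by $\mathcal{L}$, which is convolution with the (positive, Schwartz) heat kernel $p_t$, the formula $(-\mathcal{L})^{-a/2}=\frac{1}{\Gamma(a/2)}\int_0^\infty t^{a/2-1}e^{t\mathcal{L}}\,dt$ gives $\mathcal{I}_a(g)=\frac{1}{\Gamma(a/2)}\int_0^\infty t^{a/2-1}p_t(g)\,dt$. Using the exact parabolic scaling $p_t(g)=t^{-Q/2}p_1(\delta_{t^{-1/2}}g)$ (with $\delta_r$ the homogeneous dilations of $\HH$) together with the Gaussian upper bound $0<p_t(g)\lesssim t^{-Q/2}\exp(-c|g|^2/t)$ for a homogeneous norm $|\cdot|$, the substitution $t\mapsto|g|^2t$ yields the pointwise estimate
\[
0<\mathcal{I}_a(g)\lesssim |g|^{a-Q},\qquad g\in\HH\setminus\{0\};
\]
in fact $\mathcal{I}_a$ is smooth off the identity and homogeneous of degree $a-Q>-Q$, hence locally integrable.

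It then remains to establish the Hardy--Littlewood--Sobolev inequality for the fractional integral operator $f\mapsto f*_{(g)}|\cdot|^{a-Q}$ on the homogeneous group $\HH$, viewed as a space of homogeneous type with quasi-distance $|g^{-1}h|$ and Haar (= Lebesgue) measure, for which $|B(g,r)|\simeq r^{Q}$. This is classical: for $\|f\|_{L^p}=1$ one splits the kernel at a radius $R$, estimating the near part by H\"older against $\int_{|g|<R}|g|^{a-Q}\,dg\simeq R^{a}$ and the far part by H\"older against $\big(\int_{|g|>R}|g|^{(a-Q)p'}\,dg\big)^{1/p'}\simeq R^{a-Q/p}$, and optimizing in $R$ to obtain the weak-type $(p,q)$ estimate; Marcinkiewicz interpolation between two such weak-type bounds (legitimate since $p>1$) upgrades it to the strong $L^p\to L^q$ estimate. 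Combined with the kernel bound of the previous step and density of $\mathcal{S}(\HH)$ in $L^p(\HH)$, this proves the theorem.

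The main technical obstacle is the middle step: one needs the uniform Gaussian upper bound and the exact parabolic scaling of the sub-Laplacian heat kernel on $\HH$ in order to pin down both the size and the precise homogeneity degree $a-Q$ of $\mathcal{I}_a$; once that kernel estimate is available, passing to the $L^p\to L^q$ bound is the standard homogeneous-space Hardy--Littlewood--Sobolev argument. Alternatively, the whole statement is already contained in the Folland--Stein calculus on stratified (more generally graded) groups, so it may be quoted directly, cf.\ \cite{Fischer,30}.
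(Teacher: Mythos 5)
The paper does not actually prove this theorem: it is stated as a recalled result, with the references (\cite{30}, \cite{Fischer}, etc.) given immediately before the statement standing in for a proof in the more general stratified/graded group setting. So there is no in-paper argument to compare against, and the relevant question is only whether your self-contained outline is sound. It is, and it is the standard Folland--Stein route: the reduction to $\|(-\mathcal{L})^{-a/2}f\|_{L^q(\HH)}\lesssim\|f\|_{L^p(\HH)}$ matches the paper's definition of $\dot H^{q,-a}_{\mathcal{L}}$; the condition $0<a<Q/p<Q$ does follow from $q<\infty$ and is exactly what makes the subordination integral converge (after the substitution $t=|g|^2 s$ one needs $a<Q$ for the $s\to\infty$ tail, and $a<Q/p$ for the $L^{p'}$-integrability of the far part of the kernel); and the truncation/weak-type/Marcinkiewicz scheme runs on $\HH$ as a space of homogeneous type, with room to interpolate because $1<p<Q/a$. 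Two minor points worth tightening: the estimate for the near part of the kernel is Young's convolution inequality (the $L^1$-norm of the truncated kernel), not H\"older; and since $e^{t\mathcal{L}}$ has no decay on $L^p$ (the spectrum of $-\mathcal{L}$ reaches down to $0$), the subordination formula should be read as defining $\mathcal{I}_a$ pointwise off the identity as a locally integrable, $(a-Q)$-homogeneous kernel and then verifying that convolution with it represents $(-\mathcal{L})^{-a/2}$ on Schwartz functions, which is precisely what the Folland--Stein calculus supplies. Your closing remark --- that the statement can simply be quoted from that calculus --- is in effect what the authors do.
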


We have the Gagliardo-Nirenberg inequality  on  $\mathbb{H}^n$ as follows:
\begin{theorem}[Gagliardo-Nirenberg inequality]\label{eq16}
	Let  $Q $ be the homogeneous dimension on the Heisenberg group $\HH$.   Assume that
	$$
	s\in(0,1], 1<r<\frac{Q}{s},\text { and }~  2 \leq q \leq \frac{rQ}{Q-sr} 		.$$
	Then we have the following Gagliardo-Nirenberg type inequality,
	$$
	\|u\|_{L^q(\mathbb{H}^n)} \lesssim\|u\|_{\dot{H}_{\mathcal{L}}^{r,s}(\mathbb{H}^n)}^\theta\|u\|_{L^2(\mathbb{H}^n)}^{1-\theta},
	$$
	for $\theta=\left(\frac{1}{2}-\frac{1}{q}\right)/{\left(\frac{s}{Q}+\frac{1}{2}-\frac{1}{r}\right)}\in[0,1]$, provided $\frac{s}{Q}+\frac{1}{2}\neq \frac{1}{r}$.
	
\end{theorem}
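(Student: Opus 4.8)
The plan is to obtain this Gagliardo--Nirenberg inequality by combining two ingredients already available on $\HH$: the subelliptic (Folland--Stein) Sobolev embedding, which here is a direct consequence of the Hardy--Littlewood--Sobolev inequality of Theorem \ref{eq177}, together with the elementary $L^p$-interpolation (H\"older) inequality. Throughout one fixes $u$ with $\|u\|_{L^2(\HH)}$ and $\|u\|_{\dot{H}_{\mathcal{L}}^{r,s}(\HH)}$ finite, since otherwise there is nothing to prove.

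The first step would be to record the endpoint Sobolev embedding $\dot{H}_{\mathcal{L}}^{r,s}(\HH)\hookrightarrow L^{q^\ast}(\HH)$, where $\tfrac{1}{q^\ast}=\tfrac{1}{r}-\tfrac{s}{Q}$, i.e. $q^\ast=\tfrac{rQ}{Q-sr}$. Indeed, one applies Theorem \ref{eq177} with $a=s$, $p=r$ and $q=q^\ast$: the scaling relation $\tfrac{a}{Q}=\tfrac{1}{p}-\tfrac{1}{q}$ reads precisely $\tfrac{s}{Q}=\tfrac{1}{r}-\tfrac{1}{q^\ast}$, the hypothesis $r<Q/s$ ensures $1<r<q^\ast<\infty$ so the inequality is applicable, and evaluating it on $(-\mathcal{L})^{s/2}u\in L^r(\HH)$ gives
$$
\|u\|_{L^{q^\ast}(\HH)}=\big\|(-\mathcal{L})^{-s/2}(-\mathcal{L})^{s/2}u\big\|_{L^{q^\ast}(\HH)}\lesssim\big\|(-\mathcal{L})^{s/2}u\big\|_{L^r(\HH)}=\|u\|_{\dot{H}_{\mathcal{L}}^{r,s}(\HH)},
$$
using the definition of the homogeneous Sobolev norms together with the functional-calculus identity $(-\mathcal{L})^{-s/2}(-\mathcal{L})^{s/2}=\mathrm{Id}$ (legitimate since $0$ is not in the spectrum of $-\mathcal{L}$ on $L^2$, in analogy with the Hermite operator recalled in Section \ref{sec2}).

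The second step is pure interpolation. Given $q\in[2,q^\ast]$, one chooses $\theta$ through $\tfrac{1}{q}=\tfrac{1-\theta}{2}+\tfrac{\theta}{q^\ast}$; then H\"older's inequality on $\HH$ yields $\|u\|_{L^q(\HH)}\le\|u\|_{L^2(\HH)}^{1-\theta}\|u\|_{L^{q^\ast}(\HH)}^{\theta}$, and combining this with the display above gives the asserted bound. It remains only to identify $\theta$: rewriting the defining relation as $\tfrac{1}{q}=\tfrac{1}{2}-\theta\big(\tfrac{1}{2}-\tfrac{1}{q^\ast}\big)$ and solving gives $\theta=\big(\tfrac{1}{2}-\tfrac{1}{q}\big)\big/\big(\tfrac{1}{2}-\tfrac{1}{q^\ast}\big)$, and substituting $\tfrac{1}{q^\ast}=\tfrac{1}{r}-\tfrac{s}{Q}$ turns the denominator into $\tfrac{s}{Q}+\tfrac{1}{2}-\tfrac{1}{r}$, which is the stated expression. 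The hypothesis $\tfrac{s}{Q}+\tfrac{1}{2}\neq\tfrac{1}{r}$ is exactly what prevents this denominator from vanishing, and $\theta\in[0,1]$ is equivalent to $2\le q\le q^\ast$; when the parameters force $q^\ast<2$ the admissible range for $q$ is empty and there is nothing to prove.

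The computations of the second step are entirely routine, so the only point that needs genuine care — the main obstacle — is the first: justifying that the abstract Hardy--Littlewood--Sobolev inequality of Theorem \ref{eq177}, together with the identification of the fractional powers of $-\mathcal{L}$ via the group Fourier transform / functional calculus described in Section \ref{sec2}, really applies to the distribution $(-\mathcal{L})^{s/2}u$ when $u\in\dot{H}_{\mathcal{L}}^{r,s}(\HH)$ with $1<r<Q/s$. This is handled by a standard density argument — first reducing to $u\in\mathcal{S}(\HH)$, or to $u$ for which $(-\mathcal{L})^{s/2}u\in L^r\cap L^2$ — after which the inequality extends to all of $L^2(\HH)\cap\dot{H}_{\mathcal{L}}^{r,s}(\HH)$ by the usual limiting procedure.
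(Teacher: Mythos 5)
Your argument is correct. Note, however, that the paper does not prove this statement at all: Theorem \ref{eq16} is recalled as a known preliminary, with the reader referred to \cite{30,Fischer,DKR23,GKR,GKR2} for proofs in the (more general) graded group setting, so there is no in-paper proof to compare against. Your derivation --- the endpoint embedding $\dot{H}_{\mathcal{L}}^{r,s}\hookrightarrow L^{q^\ast}$ with $\tfrac{1}{q^\ast}=\tfrac{1}{r}-\tfrac{s}{Q}$ obtained from the Hardy--Littlewood--Sobolev inequality of Theorem \ref{eq177} applied to $f=(-\mathcal{L})^{s/2}u$, followed by the Lyapunov/H\"older interpolation $\|u\|_{L^q}\le\|u\|_{L^2}^{1-\theta}\|u\|_{L^{q^\ast}}^{\theta}$ --- is exactly the standard route taken in those references, and your bookkeeping of the exponent $\theta$, of the nondegeneracy condition $\tfrac{s}{Q}+\tfrac12\neq\tfrac1r$, and of the identity $(-\mathcal{L})^{-s/2}(-\mathcal{L})^{s/2}u=u$ (justified by injectivity of $-\mathcal{L}$ on $L^2$ plus density) is all in order.
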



\section{Linear damped wave equations: $\dot{H}_{\mathcal{L}}^s(\mathbb{H}^n)$-norm  estimates}\label{sec3}
In this section,  as a preliminary step in preparing to investigate the local and global well-posedness of the nonlinear Cauchy problem (\ref{eq0010}), we examine its associated linear counterpart, which involves a vanishing right-hand side. Specifically, our attention is directed towards the decay properties of solutions, in which our proofs are slightly different from the known results. Let us consider the Cauchy problem
\begin{align}\label{eq001}
	\begin{cases}
		\partial^2_tu-\mathcal{L}u+	\partial_tu =0, & g\in \mathbb{H}^n,t>0,\\
		u(0,g)=  u_0(g),  & g\in \mathbb{H}^n,\\ \partial_tu(0, g)=  u_1(g), & g\in \mathbb{H}^n,
	\end{cases}
\end{align}
where     $u_{0}(g)$ and $u_{1}(g)$ are the initial data additionally belonging to subelliptic Sobolev space $\dot H_{\mathcal{L}}^{-\gamma}(\HH)$ of negative order.  We will derive $\dot H_{\mathcal{L}}^s(\mathbb{H}^n)$-norm  estimates  for  the solution $u(t, \cdot)$   to the homogeneous problem     (\ref{eq001}). We make use of the group Fourier transform on the Heisenberg group $\mathbb{H}^n$, specifically with respect to the spatial variable $g$, and combine it with the Plancherel identity to estimate the $\dot H_{\mathcal{L}}^s(\mathbb{H}^n)$-norm.    {   We refer to  \cite{Palmieri 2020}, where   a similar approach has been carried out for the $L^2$-estimates of the solution to the damped wave equation (\ref{eq001}) on the Heisenberg group $\mathbb{H}^n$.}

Given that the group Fourier transform of a function $f \in L^2\left(\mathbb{H}^n\right)$  is no longer a function but a family of bounded linear operators $\{\widehat{f}(\lambda)\}_{\lambda \in \mathbb{R}^*}$ on $L^2\left(\mathbb{R}^n\right)$, the proofs are more involved and do not follow as in the Euclidean setup. We overcome this obstacle by using a trick to project these operators by using the orthonormal basis $\left\{e_k\right\}_{k \in \mathbb{N}^n}$ of $L^2(\mathbb{R}^n)$, namely, by working with the Fourier coefficients  
$
\left\{\left(\widehat{f}(\lambda) e_k, e_{\ell}\right)_{L^2\left(\mathbb{R}^n\right)}\right\}_{k, \ell \in \mathbb{N}^n}
$
for the Fourier transform $\widehat{f}(\lambda)$ for each $\lambda \in \mathbb{R}^*.$

Invoking the group Fourier transform with respect to $g$ on   (\ref{eq001}),  we get a   Cauchy problem related to a parameter-dependent functional differential equation for $ \widehat{u}(t,\lambda),$ namely, 
\begin{align}\label{eq6661}
	\begin{cases}
		\partial^2_t\widehat{u}(t,\lambda)-	\sigma_{\mathcal{L}}(\lambda) \widehat{u}(t,\lambda) +\partial_t \widehat{u}(t,\lambda)=0,& \lambda \in\mathbb{R}^*,~t>0,\\ \widehat{u}(0,\lambda)=\widehat{u}_0(\lambda), &\lambda \in\mathbb{R}^*,\\ \partial_t\widehat{u}(0,\lambda)=\widehat{u}_1(\lambda), &\lambda \in\mathbb{R}^*,
	\end{cases} 
\end{align}
where  $\sigma_{\mathcal{L}}(\lambda)$ is the symbol of the sub-Laplacian $\mathcal{L}$ on $\HH$.  In fact, we know that   $\sigma_{\mathcal{L}}(\lambda)=-|\lambda| \mathrm{H}$.   For any $ k, \ell \in \mathbb{N}^n$, let us introduce the notation
$$
  {   \widehat{u}(t, \lambda)_{k, \ell} \doteq\left(\widehat{u}(t, \lambda) e_\ell, e_{k}\right)_{L^2\left(\mathbb{R}^n\right)},}
$$
where $\left\{e_k\right\}_{k \in \mathbb{N}^n}$ is the system of Hermite functions forming an orthonormal basis of $L^2(\mathbb{R}^n)$. Since $\mathrm{H} e_k=\mu_k e_k$, $\widehat{u}(t, \lambda)_{k, \ell}$ solves an ordinary differential equation with respect to the variable $t$ depending on parameters $\lambda \in \mathbb{R}^*$ and $k, \ell \in \mathbb{N}^n,$	
\begin{align}\label{eqq7}
	\begin{cases}
		\partial^2_t\widehat{u}(t,\lambda)_{kl}+	\partial_t\widehat{u}(t,\lambda)_{kl}+ \beta_{k, \lambda}^{2 }  \widehat{u}(t,\lambda)_{kl}= 0,& \lambda \in\mathbb{R}^*,~t>0,\\ \widehat{u}(0,\lambda)_{kl}=\widehat{u}_0(\lambda)_{kl}, &\lambda \in\mathbb{R}^*,\\ \partial_t\widehat{u}(0,\lambda)_{kl}=\widehat{u}_1(\lambda)_{kl}, &\lambda \in\mathbb{R}^*,
	\end{cases}
\end{align}
where $\beta_{k, \lambda}^{2 } =|\lambda|\mu_k.$	Then, the characteristic equation of (\ref{eqq7}) is given by
\[m^2+	m+\beta_{k, \lambda}^{2 } =0,\]
and so the characteristic roots $m_1$ and $m_2$   are     $\frac{-1- \sqrt{1-4\beta_{k, \lambda}^{2 }}}{2}  $ and $\frac{-1+ \sqrt{1-4\beta_{k, \lambda}^{2 }}}{2},$ respectively. 

Observe that,  for    $|\beta_{k, \lambda}|  \ll 1$, we get
\begin{align*}
	&  m_1=\frac{-1- \sqrt{1-4\beta_{k, \lambda}^{2 }}}{2}
	=\frac{-1- (1-4{\beta_{k, \lambda}}^2 )^{\frac{1}{2}}}{2}=-1+\mathcal{O}\left(\beta_{k, \lambda}^{2 }\right),\\
	&     m_2= 
	\frac{-1+ (1-4{\beta_{k, \lambda}}^2 )^{\frac{1}{2}}}{2}=-\beta_{k, \lambda}^{2 }+\mathcal{O}\left(\beta_{k, \lambda}^4\right).
\end{align*}
Again,  for $|\beta_{k, \lambda}| \gg 1$, we obtain
\begin{align*}
	m_{1}= \frac{-1- (1-4\beta_{k, \lambda}^2 )^{\frac{1}{2}}}{2}  &=\frac{-1- 2i|\beta_{k, \lambda}| (1-(4\beta_{k, \lambda}^2)^{-1})^{\frac{1}{2}}} {2}  
	\\&=- i|\beta_{k, \lambda}|-\frac{1}{2}+\mathcal{O}\left(|\beta_{k, \lambda}|^{-1}\right),\end{align*}
\begin{align*}
	m_{2}= \frac{-1+(1-4\beta_{k, \lambda}^2 )^{\frac{1}{2}}}{2}  &=\frac{-1+ 2i|\beta_{k, \lambda}| (1-(4\beta_{k, \lambda}^2)^{-1})^{\frac{1}{2}}} {2}  
	\\&= i|\beta_{k, \lambda}|-\frac{1}{2}+\mathcal{O}\left(|\beta_{k, \lambda}|^{-1}\right) .\end{align*}
We collect the above easy observations as  the following relations:
\begin{itemize}
	\item 	$m_1=-1+\mathcal{O}\left(\beta_{k, \lambda}^{2 }\right), m_2=-\beta_{k, \lambda}^{2 }+\mathcal{O}\left(\beta_{k, \lambda}^4\right)$ for $|\beta_{k, \lambda}|<\varepsilon \ll 1$;
	\item $m_{1}= - i|\beta_{k, \lambda}|-\frac{1}{2}+\mathcal{O}\left(|\beta_{k, \lambda}|^{-1}\right)$ and $m_{2}=  i|\beta_{k, \lambda}|-\frac{1}{2}+\mathcal{O}\left(|\beta_{k, \lambda}|^{-1}\right)$ for $|\beta_{k, \lambda}|>N \gg 1$;
	\item    {  $\text{Re}( m_{1})<0$ and $\text{Re}( m_{2})<0$ for $\varepsilon \leq|\beta_{k, \lambda}| \leq N$.}
\end{itemize}
Therefore, the solution to the   homogeneous  problem   (\ref{eqq7}) is given by
\begin{align}\label{Asymptotic exp} 
	\widehat{u}(t,\lambda)_{kl}&= A_0(t, \lambda)_{k\ell} \widehat{u}_0(\lambda)_{kl}+ A_1(t, \lambda)_{k\ell}\widehat{u}_1(\lambda)_{k\ell},
\end{align}
where
\begin{align}\label{shyam} 
	& 	A_0(t, \lambda)_{k\ell}=\frac{m_1{e}^{m_2 t}-m_2{e}^{m_1 t}}{m_1-m_2} \nonumber \\\\&=\begin{cases}
		\frac{\left(-1+\mathcal{O}(\beta_{k, \lambda}^2)\right){e}^{\left(-\beta_{k, \lambda}^{2 }+\mathcal{O} (\beta_{k, \lambda}^{4} )\right)t}-\left(-\beta_{k, \lambda}^{2 }+\mathcal{O} (\beta_{k, \lambda}^{4})\right){e}^{\left(-1 +\mathcal{O} (\beta_{k, \lambda}^{2} )\right)t}}{-1+\mathcal{O}\left(\beta_{k, \lambda}^{2 }\right)} & \text { for }|\beta_{k, \lambda}|<\varepsilon,  \\\\ 	\frac{\left(i|\beta_{k, \lambda}|-\frac{1}{2}+\mathcal{O}\left(|\beta_{k, \lambda}|^{-1}\right)\right)  {e}^{\left(-i|\beta_{k, \lambda}|-\frac{1}{2}+\mathcal{O}\left(|\beta_{k, \lambda}|^{-1}\right)\right) t}}{2 i|\beta_{k, \lambda}|+\mathcal{O}(1)} &  \\\\
		\qquad 	-\frac{\left(-i|\beta_{k, \lambda}|-\frac{1}{2}+\mathcal{O}\left(|\beta_{k, \lambda}|^{-1}\right)\right)  {e}^{\left(i|\beta_{k, \lambda}|-\frac{1}{2}+\mathcal{O}\left(|\beta_{k, \lambda}|^{-1}\right)\right) t}}{2 i|\beta_{k, \lambda}|+\mathcal{O}(1)}
		& \text { for }|\beta_{k, \lambda}|>N .
	\end{cases} 
\end{align}

and
\begin{align} \label{shyam2}
	&\nonumber	A_1(t, \lambda)_{k\ell}=\frac{ {e}^{m_1 t}-{e}^{m_2 t}}{m_1-m_2}\\\\&=\left\{\begin{array}{ll}
		\frac{ {e}^{\left(-1+\mathcal{O}\left(\beta_{k, \lambda}^2\right)\right) t}- {e}^{\left(-\beta_{k, \lambda}^2+\mathcal{O}\left(\beta_{k, \lambda}^4\right)\right) t}}{-1+\mathcal{O}\left(\beta_{k, \lambda}^2\right)} & \text { for }|\beta_{k, \lambda}|<\varepsilon, \\\\
		\frac{ {e}^{\left(i|\beta_{k, \lambda}|-\frac{1}{2}+\mathcal{O}\left(|\beta_{k, \lambda}|^{-1}\right)\right) t}- {e}^{\left(-i|\beta_{k, \lambda}|-\frac{1}{2}+\mathcal{O}\left(|\beta_{k, \lambda}|^{-1}\right)\right) t}}{2 i|\beta_{k, \lambda}|+\mathcal{O}(1)}  & \text { for }|\beta_{k, \lambda}|>N .
	\end{array}\right.
\end{align}
Thus from the above collected observations and asymptotic expression,   we deduce the following  pointwise estimates
\begin{align}\label{K_0estimate}
	\left|A_0(t,\lambda)_{k\ell}\right| \lesssim\left\{\begin{array}{ll}
		|\beta_{k, \lambda}|^2  {e}^{-c t}+ {e}^{-ct\beta_{k, \lambda}^2 } & \text { for }|\beta_{k, \lambda}|<\varepsilon \ll 1, \\\\
		\mathrm{e}^{-c t} & \text { for } \varepsilon \leq|\beta_{k, \lambda}| \leq N, \\\\
		\mathrm{e}^{-c t} & \text { for }|\beta_{k, \lambda}|>N \gg 1,
	\end{array}\right.
\end{align}
and 
\begin{align}\label{K_1estimate}
	\left|A_1(t,\lambda)_{k\ell}\right| \lesssim\left\{\begin{array}{ll}
		{e}^{-c t}+ {e}^{-ct\beta_{k, \lambda}^2 } & \text { for }|\beta_{k, \lambda}|<\varepsilon \ll 1, \\\\
		{e}^{-c t} & \text { for } \varepsilon \leq|\beta_{k, \lambda}| \leq N, \\\\
		|\beta_{k, \lambda}|^{-1}  {e}^{-c t} & \text { for }|\beta_{k, \lambda}|>N \gg 1,
	\end{array}\right.
\end{align}
for some suitable positive constant $c$.

Before  finding  the Sobolev norm of $u(t, \cdot)$, first we notice that, for $ \beta_{k, \lambda}^2 = |\lambda| \mu_k<\varepsilon^2$, we have
\begin{align}\label{decay}
	\left| (|\lambda| \mu_k)^{s+\gamma}     {e}^{-ct|\lambda| \mu_k}\right|\lesssim (1+t)^{-(s+\gamma)}, \qquad \text{for~} s+\gamma \geq 0.
\end{align}

Now by using the Plancherel formula and the fact that $\{e_k\}_{k\in \mathbb{N}^n}$  is an  orthonormal basis of $L^2(\mathbb{R}^n)$, we have 
\begin{align}\label{eq01}\nonumber
	&\|u(t, \cdot )\|_{\dot H^s_{\mathcal{L}}}^2=\int_{\mathbb{R}^*}\|(-\sigma_{\mathcal{L}}(\lambda))^{\frac{s}{2}} \widehat{u}(t, \lambda)\|_{S_2}^2  \,d\mu(\lambda)\\\nonumber
	&\qquad=\int_{\mathbb{R}^*} \sum_{k, \ell\in \mathbb{N}^n} (|\lambda| \mu_k)^{s} |\widehat{u}(t, \lambda)_{k\ell}|^2  \,d\mu(\lambda)\\\nonumber
	&\qquad\lesssim \int_{\mathbb{R}^*} \sum_{k, \ell\in \mathbb{N}^n} (|\lambda| \mu_k)^{s}\left[  |A_0(t, \lambda)_{k\ell}|^2  |\widehat{u}_0(\lambda)_{kl}|^2 + |A_1(t, \lambda)_{k\ell}|^2 |\widehat{u}_1(\lambda)_{k\ell} |^2 \right] d\mu(\lambda)\\
	&\qquad= (I)+(II),
\end{align}
where 
$$(I):=\int_{\mathbb{R}^*} \sum_{k, \ell\in \mathbb{N}^n} (|\lambda| \mu_k)^{s}   |A_0(t, \lambda)_{k\ell}|^2  |\widehat{u}_0(\lambda)_{kl}|^2   d\mu(\lambda),$$
and 
$$(II):=\int_{\mathbb{R}^*} \sum_{k, \ell\in \mathbb{N}^n} (|\lambda| \mu_k)^{s}  |A_1(t, \lambda)_{k\ell}|^2 |\widehat{u}_1(\lambda)_{k\ell} |^2  d\mu(\lambda).$$

   {  
\noindent \textbf{Case 1: When $|\beta_{k, \lambda}|<\varepsilon \ll 1$:} Recalling that $\beta_{k, \lambda}^2=|\lambda| \mu_k$ and  using \eqref{K_0estimate} and \eqref{decay},  we get 
\begin{align}\label{eq02}\nonumber
	(I)&=  \sum_{k, \ell\in \mathbb{N}^n} \int_{|\lambda|<\frac{\varepsilon^2}{\mu_k}} (|\lambda| \mu_k)^{s+\gamma}   |A_0(t, \lambda)_{k\ell}|^2  (|\lambda| \mu_k)^{-\gamma}|\widehat{u}_0(\lambda)_{kl}|^2   d\mu(\lambda)\\\nonumber
	&	\leq   \sum_{k, \ell\in \mathbb{N}^n} \int_{|\lambda|<\frac{\varepsilon^2}{\mu_k}}  (|\lambda| \mu_k)^{s+\gamma}   \{	|\beta_{k, \lambda}|^2  {e}^{-c t}+ {e}^{-ct\beta_{k, \lambda}^2 }\}^2 (|\lambda| \mu_k)^{-\gamma}|\widehat{u}_0(\lambda)_{kl}|^2   d\mu(\lambda)\\\nonumber
	& \leq \sum_{k, \ell\in \mathbb{N}^n} \int_{|\lambda|<\frac{\varepsilon^2}{\mu_k}}  (|\lambda| \mu_k)^{s+\gamma} {e}^{-2ct\beta_{k, \lambda}^2}  \{	|\beta_{k, \lambda}|^2  {e}^{-c t (1-\beta_{k, \lambda}^2)}+ 1\}^2 (|\lambda| \mu_k)^{-\gamma}|\widehat{u}_0(\lambda)_{kl}|^2   d\mu(\lambda)\\\nonumber
	& \lesssim (1+t)^{-(s+\gamma)}	     \sum_{k, \ell\in \mathbb{N}^n} \int_{|\lambda|<\frac{\varepsilon^2}{\mu_k}}    (|\lambda| \mu_k)^{-\gamma}|\widehat{u}_0(\lambda)_{kl}|^2   d\mu(\lambda)\\
	&\lesssim (1+t)^{-(s+\gamma)}	  \|{u}_0 \|_{\dot H^{-\gamma}_{\mathcal{L}}}^2   .
\end{align}
Similarly, using \eqref{K_1estimate} and \eqref{decay} we can find that 
\begin{align}\label{eq03}\nonumber
	(II)&=\sum_{k, \ell\in \mathbb{N}^n}\int_{|\lambda|<\frac{\varepsilon^2}{\mu_k}}   (|\lambda| \mu_k)^{s}  |A_1(t, \lambda)_{k\ell}|^2 |\widehat{u}_1(\lambda)_{k\ell} |^2  d\mu(\lambda)\\
	&\lesssim (1+t)^{-(s+\gamma)}	  \|{u}_1 \|_{ \dot H^{-\gamma}_{\mathcal{L}}}^2   .
\end{align}

\noindent	\textbf{Case 2: When $|\beta_{k, \lambda}|>N  \gg 1$:} Again using \eqref{K_0estimate} we get
\begin{align}\label{eq04}\nonumber
	(I)&=  \sum_{k, \ell\in \mathbb{N}^n} \int_{|\lambda|>\frac{N^2}{\mu_k}}  (|\lambda| \mu_k)^{s}   |A_0(t, \lambda)_{k\ell}|^2  |\widehat{u}_0(\lambda)_{kl}|^2   d\mu(\lambda)\\\nonumber
	& 	\lesssim  {e}^{-2c t}   \sum_{k, \ell\in \mathbb{N}^n} \int_{|\lambda|>\frac{N^2}{\mu_k}}  (1+|\lambda| \mu_k)^{s }     |\widehat{u}_0(\lambda)_{kl}|^2   d\mu(\lambda)\\
	& \lesssim {e}^{-2c t}   \|{u}_0 \|_{H_\mathcal{L}^{-s}}^2   .
\end{align}
Also, with the help of \eqref{K_1estimate} 
we  find that 
\begin{align}\label{eq05}\nonumber
	(II)&= \sum_{k, \ell\in \mathbb{N}^n} \int_{|\lambda|>\frac{N^2}{\mu_k}} (|\lambda| \mu_k)^{s}  |A_1(t, \lambda)_{k\ell}|^2 |\widehat{u}_1(\lambda)_{k\ell} |^2  d\mu(\lambda)\\\nonumber
	&\lesssim   \sum_{k, \ell\in \mathbb{N}^n} \int_{|\lambda|>\frac{N^2}{\mu_k}} (|\lambda| \mu_k)^{s}  	|\beta_{k, \lambda}|^{-2}  {e}^{-2c t} |\widehat{u}_1(\lambda)_{k\ell} |^2  d\mu(\lambda)\\\nonumber
	&=   {e}^{-2c t}  \sum_{k, \ell\in \mathbb{N}^n} \int_{|\lambda|>\frac{N^2}{\mu_k}} (1+|\lambda| \mu_k)^{s-1}  \left( \frac{|\lambda| \mu_k }{1+|\lambda| \mu_k}\right)^{s-1} 	   |\widehat{u}_1(\lambda)_{k\ell} |^2  d\mu(\lambda)\\
	&\lesssim     {e}^{-2c t}  \|{u}_1 \|_{H_\mathcal{L}^{s-1}}^2.
\end{align}
\textbf{Case 3: When $\varepsilon\leq |\beta_{k, \lambda}|\leq N$:} From \eqref{K_0estimate}  and  \eqref{K_1estimate} , we get

\begin{align}\label{eqs01}\nonumber
	(I)&=\sum_{k, \ell\in \mathbb{N}^n}  \int_{\frac{N^2}{\mu_k}\leq |\lambda|\leq \frac{\varepsilon^2}{\mu_k} }    (|\lambda| \mu_k)^{s}   |A_0(t, \lambda)_{k\ell}|^2  |\widehat{u}_0(\lambda)_{kl}|^2   d\mu(\lambda)\\\nonumber
	& \lesssim {e}^{-2c t}  \sum_{k, \ell\in \mathbb{N}^n} \int_{\frac{N^2}{\mu_k}\leq |\lambda|\leq \frac{\varepsilon^2}{\mu_k} }  (|\lambda| \mu_k)^{s}     |\widehat{u}_0(\lambda)_{kl}|^2   d\mu(\lambda)\\\nonumber
	& \lesssim {e}^{-2c t}  \sum_{k, \ell\in \mathbb{N}^n}     \int_{\frac{N^2}{\mu_k}\leq |\lambda|\leq \frac{\varepsilon^2}{\mu_k} }     |\widehat{u}_0(\lambda)_{kl}|^2   d\mu(\lambda)\\& \lesssim {e}^{-2c t} \|{u}_0 \|_{L^2}^2,
\end{align}
and 
\begin{align}\label{eqs05}\nonumber
	(II)&=  \sum_{k, \ell\in \mathbb{N}^n} \int_{\frac{N^2}{\mu_k}\leq |\lambda|\leq \frac{\varepsilon^2}{\mu_k} }  (|\lambda| \mu_k)^{s}  |A_1(t, \lambda)_{k\ell}|^2 |\widehat{u}_1(\lambda)_{k\ell} |^2  d\mu(\lambda)\\\nonumber
	& \lesssim {e}^{-2c t}  \sum_{k, \ell\in \mathbb{N}^n} \int_{\frac{N^2}{\mu_k}\leq |\lambda|\leq \frac{\varepsilon^2}{\mu_k} }  (1+|\lambda| \mu_k)^{s}    |\widehat{u}_0(\lambda)_{kl}|^2   d\mu(\lambda)\\\nonumber
	& \lesssim {e}^{-2c t}  \sum_{k, \ell\in \mathbb{N}^n} \int_{\frac{N^2}{\mu_k}\leq |\lambda|\leq \frac{\varepsilon^2}{\mu_k} }   (1+|\lambda| \mu_k)^{s-1}     |\widehat{u}_0(\lambda)_{kl}|^2   d\mu(\lambda) 	\\
	& \lesssim  {e}^{-2c t} \|{u}_1 \|_{H_\mathcal{L}^{s-1}}^2.
\end{align}

%
%

\begin{proof}[Proof of Theorem \ref{eq0133}]
	Combining     above Case 1 [\eqref{eq02} and \eqref{eq03}], Case 2 [\eqref{eq04} and \eqref{eq05}], and Case 3 [\eqref{eqs01} and \eqref{eqs05}],  we obtain   
	the following $ \dot {H}_\mathcal{L}^s$-decay estimate	$$
	\|u(t, \cdot)\|_{ \dot{H}_\mathcal{L}^s} \lesssim(1+t)^{-\frac{s+\gamma}{2}}\left(\left\|u_0\right\|_{H^s_\mathcal{L} \cap \dot{H}^{-\gamma}_\mathcal{L}}+\left\|u_1\right\|_{H_\mathcal{L}^{s-1} \cap \dot{H}_\mathcal{L}^{-\gamma}}\right) ,
	$$		for any $t\geq 0$. 
\end{proof}}

\section{Global-in-time well-posedness} \label{sec4}
In this section, we will prove  Theorem \ref{well-posed}, that is,   the global-in-time well-posedness of the   Cauchy problem   \eqref{eq0010}  in the energy evolution space  $\mathcal C\left([0,T],  H^s_{\mathcal{L}}(\mathbb{H}^n)\right)$.  

	\begin{proof}[Proof of Theorem \ref{well-posed}]
		Recall that  for $s \geq 0$ and $\gamma \in \mathbb{R}$ such that $s+\gamma \geq  0$, 	from  the estimate (\ref{hom}) of Theorem \ref{eq0133}, 
		we have 
		\begin{equation} \label{4er}
			\|u(t, \cdot)\|_{\dot{{H}}_\mathcal{L}^s(\HH)} \lesssim(1+t)^{-\frac{s+\gamma}{2}}\left(\left\|u_0\right\|_{H^s_{\mathcal{L}}  \cap \dot {{H}}_\mathcal{L}^{-\gamma} }+\left\|u_1\right\|_{H_\mathcal{L}^{s-1}  \cap \dot{{H}}_\mathcal{L}^{-\gamma} }\right) .
		\end{equation}
		In particular, for $\gamma=0$ and $s \geq 0,$ we get 
		\begin{align}\label{eq15}
			\|u(t, \cdot)\|_{{\dot{H}}_\mathcal{L}^s} \lesssim(1+t)^{-\frac{s}{2}}\left(\left\|u_0\right\|_{H_\mathcal{L}^s }+\left\|u_1\right\|_{L^2}\right).
		\end{align}
		From \eqref{4er}, in particular for $s \in [0, 1]$ and $\gamma>0$ 
		we have the following  estimate for the Sobolev solutions of the linear Cauchy problem
		\begin{align}\label{eq14}
			\|u(t, \cdot)\|_{\dot{{H}}_\mathcal{L}^s} \lesssim(1+t)^{-\frac{s+\gamma}{2}}\left(\left\|u_0\right\|_{ H^s_{\mathcal{L}} \cap {\dot{H}}_\mathcal{L}^{-\gamma}}+\left\|u_1\right\|_{L^2 \cap {\dot{H}}_\mathcal{L}^{-\gamma}}\right), 
		\end{align}
		where we have used the Sobolev embedding   $L^2\subset H_\mathcal{L}^{s-1}$ for $s\leq 1$.
		Particularly,  for $s=0$ in  (\ref{eq14}), using  the Sobolev embedding $ H_\mathcal{L}^{s} \subset L^2$ for $s\geq0$, we obtain \begin{align}\label{eq1444}
			\nonumber	\|u(t, \cdot)\|_{L^2} &\lesssim(1+t)^{-\frac{\gamma}{2}}\left(\left\|u_0\right\|_{ L^2 \cap {\dot{H}}_\mathcal{L}^{-\gamma}}+\left\|u_1\right\|_{L^2 \cap {\dot{H}}_\mathcal{L}^{-\gamma}}\right) \\&\lesssim(1+t)^{-\frac{\gamma}{2}}\left(\left\|u_0\right\|_{ H^s_{\mathcal{L}} \cap {\dot{H}}_\mathcal{L}^{-\gamma}}+\left\|u_1\right\|_{L^2 \cap {\dot{H}}_\mathcal{L}^{-\gamma}}\right).
		\end{align}
		Now from (\ref{eq14}) and   (\ref{eq1444}), for   $  s\in (0, 1]$,  we can write
		\begin{align*}
			(1+t)^{\frac{s+\gamma}{2}}\|u(t, \cdot)\|_{{\dot {H}}_\mathcal{L}^s} &+(1+t)^{\frac{\gamma}{2}}\|u(t, \cdot)\|_{L^2}\\ &\lesssim \left(\left\|u_0\right\|_{H^s_{\mathcal{L}} \cap {\dot{H}}_\mathcal{L}^{-\gamma}}+\left\|u_1\right\|_{L^2 \cap {\dot {H}}_\mathcal{L}^{-\gamma}}\right)=\left\|\left(u_{0}, u_{1}\right)\right\|_{ {  \mathcal{A}_{\mathcal{L}}^{s, -\gamma}}}.
		\end{align*}
		Thus from above, we can   claim that   
		$u \in X_s(T)$ and  \begin{align}\label{2number100}
			\|u^{\text{lin}}\|_{X_s(T)} \lesssim   \left\|\left(u_{0}, u_{1}\right)\right\|_{ {  \mathcal{A}_{\mathcal{L}}^{s, -\gamma}}}.
		\end{align} 
		Our next aim is to prove 
		$$	\|u^{\text{non}}\|_{X_s(T)} \lesssim    \left\| u\right\|_{ X_s(T)}^p,$$
		under some conditions for $p$.
		First we will    {  estimate} $L^2$    and $\dot H^{-\gamma}_{\mathcal{L}}$ norm of $|u(t, \cdot)|^p$.  Applying Gagliardo-Nirenberg  inequality (Theorem \ref{eq16}), we have
		\begin{align}\label{eqq21}\nonumber
			&	\left \||u(\sigma, \cdot)|^p\right\|_{L^2}=	\left \|u(\sigma, \cdot)\right\|_{L^{2p}}^p\\\nonumber
			&\lesssim\|u(\sigma, \cdot)\|_{\dot {H}_\mathcal{L}^s(\mathbb{H}^n)}^{p\theta_1} \|u(\sigma, \cdot)\|_{L^2(\mathbb{H}^n)}^{p(1-\theta_1)}\\\nonumber
			&= (1+\sigma)^{-\frac{p}{2}  (s\theta_1+\gamma)}  \left\{ (1+\sigma)^{\frac{(s+\gamma)}{2} } \|u(\sigma, \cdot)\|_{\dot {H}_\mathcal{L}^s(\mathbb{H}^n)} \right\}^{p\theta_1}  \left\{ (1+\sigma)^{\frac{\gamma}{2} } \|u(\sigma, \cdot)\|_{L^2(\mathbb{H}^n)}\right\}^{p(1-\theta_1)}\\\nonumber&= (1+\sigma)^{-\frac{p}{2}\left[ \gamma+\frac{Q}{2}\left(1-\frac{1}{p}\right) \right ]}  \left\{ (1+\sigma)^{\frac{(s+\gamma)}{2} } \|u(\sigma, \cdot)\|_{\dot {H}_\mathcal{L}^s(\mathbb{H}^n)} \right\}^{p\theta_1}  \left\{ (1+\sigma)^{\frac{\gamma}{2} } \|u(\sigma, \cdot)\|_{L^2(\mathbb{H}^n)}\right\}^{p(1-\theta_1)}\\
			&\lesssim (1+\sigma)^{- p\left(\frac{\gamma}{2}+\frac{Q}{4}\right)+\frac{Q}{4}}    \left\| u\right\|_{ X_s(T)}^p,
		\end{align}
		for $\sigma\in [0, T]$ with $\theta_1=\frac{Q}{2s}(1-\frac{1}{p})\in [0, 1]$, provided that 
		\begin{align}\label{eq18}
			1\leq p\leq \frac{Q}{Q-2s}, \quad \text{if} ~~Q>2s.
		\end{align}
		On the other hand, using the  Hardy-Littlewood-Sobolev inequality   (Theorem \ref{eq177}), we have
		\begin{align*}
			\left \||u(\sigma, \cdot)|^p\right\|_{\dot H^{-\gamma}_{\mathcal{L}}}
			\lesssim	\left \| |u(\sigma, \cdot)|^p\right\|_{L^{m}}
			=	\left \| u(\sigma, \cdot)\right\|_{L^{mp}}^p
		\end{align*}
		with $\frac{1}{m}-\frac{1}{2}=\frac{\gamma}{Q}$ provided that  $0<\gamma <Q$ and $1<m<2$. Since   $m \in(1,2)$, we have to restrict $0<\gamma  <\frac{Q}{2}$. Again,   the Gagliardo-Nirenberg inequality (Theorem \ref{eq16}) implies that 
		\begin{align}\label{eq220}\nonumber
			&	\left \||u(\sigma, \cdot)|^p\right\|_{\dot H^{-\gamma}_{\mathcal{L}}}\lesssim	\left \| u(\sigma, \cdot)\right\|_{L^{mp}}^p\\\nonumber
			&\lesssim\| u(\sigma, \cdot)\|_{\dot {H}_\mathcal{L}^s(\mathbb{H}^n)}^{p\theta_2} \|u(\sigma, \cdot)\|_{L^2(\mathbb{H}^n)}^{p(1-\theta_2)}\\\nonumber
			&= (1+\sigma)^{-\frac{p}{2}  (s\theta_2+\gamma)}  \left\{ (1+\sigma)^{\frac{(s+\gamma)}{2} } \|u(\sigma, \cdot)\|_{\dot {H}_\mathcal{L}^s(\mathbb{H}^n)} \right\}^{p\theta_1}  \left\{ (1+\sigma)^{\frac{\gamma}{2} } \|u(\sigma, \cdot)\|_{L^2(\mathbb{H}^n)}\right\}^{p(1-\theta_1)}\\\nonumber
			&= (1+\sigma)^{-\frac{p}{2}  \left[ \gamma+Q(\frac{1}{2}-\frac{1}{mp})\right] }  \left\{ (1+\sigma)^{\frac{(s+\gamma)}{2} } \|u(\sigma, \cdot)\|_{\dot {H}_\mathcal{L}^s(\mathbb{H}^n)} \right\}^{p\theta_1}  \left\{ (1+\sigma)^{\frac{\gamma}{2} } \|u(\sigma, \cdot)\|_{L^2(\mathbb{H}^n)}\right\}^{p(1-\theta_1)}\\\nonumber
			&= (1+\sigma)^{-p ( \frac{\gamma}{2}+\frac{Q}{4} )+ \frac{\gamma}{2}+\frac{Q}{4}  }  \left\{ (1+\sigma)^{\frac{(s+\gamma)}{2} } \|u(\sigma, \cdot)\|_{\dot {H}_\mathcal{L}^s(\mathbb{H}^n)} \right\}^{p\theta_2}  \left\{ (1+\sigma)^{\frac{\gamma}{2} } \|u(\sigma, \cdot)\|_{L^2(\mathbb{H}^n)}\right\}^{p(1-\theta_2)}\\
			&\lesssim (1+\sigma)^{-p ( \frac{\gamma}{2}+\frac{Q}{4} )+ \frac{\gamma}{2}+\frac{Q}{4}  }  \left\| u\right\|_{ X_s(T)}^p,
		\end{align}
		for $\sigma\in [0, T]$ with $\theta_2=\frac{Q}{s}(\frac{1}{2}-\frac{1}{mp})\in [0, 1]$.     Using the fact that $\theta_2\in [0, 1]$, we get  
		\begin{align}\label{eq17}
			p\geq \frac{2}{m} \quad \text{and}\quad  p\leq   \frac{2Q}{m(Q-2s)}, \quad \text{if} ~Q>2s.
		\end{align}
		Since  $m=\frac{2Q}{Q+2\gamma}\in (1, 2)$,  from  (\ref{eq17}),  we obtain
		\begin{align}\label{eq19}
			1<	1+\frac{2\gamma}{Q} \leq p  
			\leq  {   \frac{Q+2\gamma}{(Q-2s)}}\quad  \text { if } Q>2s.
		\end{align}
		Therefore, from (\ref{eq18}) and (\ref{eq19}), if we consider 
		\begin{align*}
			1+\frac{2\gamma}{Q} \leq p  \left\{\begin{array}{ll}
				<\infty & \text { if } Q \leq 2s, \\
				\leq   \frac{Q}{(Q-2s)}& \text { if } Q>2s,
			\end{array}\right. 
		\end{align*}
		then from (\ref{eqq21}) and (\ref{eq220}),   for $\sigma\in [0, T]$,   we assert that
		\begin{align}\label{eq20}
			\left \||u(\sigma, \cdot)|^p\right\|_{L^2\cap \dot H^{-\gamma}_{\mathcal{L}}}\lesssim (1+\sigma)^{-p ( \frac{\gamma}{2}+\frac{Q}{4} )+ \frac{\gamma}{2}+\frac{Q}{4}  }  \left\| u\right\|_{ X_s(T)}^p.
		\end{align}
		Using the $ (L^2 \cap \dot {H}_\mathcal{L}^{-\gamma} )-L^2$ estimate given in (\ref{eq14}) (with $u_0=0$, $u_1= |u(\sigma,\cdot)|^p$) as well as from the statement (\ref{eq20}), we have the following $L^2$-estimate  of the solution
		\begin{align}\nonumber
			\left\|u^{\mathrm{non}}(t, \cdot)\right\|_{L^2}&=\bigg \|  \int\limits_0^t  |u(\sigma,\cdot )|^p*_{(g)} E_1(t-\sigma, \cdot) d\sigma\bigg\|_{L^2 }\\\nonumber
			&\leq   \int\limits_0^t (1+t-\sigma)^{-\frac{\gamma}{2}}  \| |u(\sigma,\cdot)|^p\|_{L^2 \cap \dot H^{-\gamma}_{\mathcal{L}}}  d\sigma\\\nonumber
			&\lesssim \int_0^t(1+t-\sigma)^{-\frac{\gamma}{2}}(1+\sigma)^{-p ( \frac{\gamma}{2}+\frac{Q}{4} )+ \frac{\gamma}{2}+\frac{Q}{4}  }d\sigma ~  \left\| u\right\|_{ X_s(T)}^p\\\label{eq21}
			& \lesssim(1+t)^{-\frac{\gamma}{2}} \int_0^{\frac{t}{2}}(1+\sigma)^{-p ( \frac{\gamma}{2}+\frac{Q}{4} )+ \frac{\gamma}{2}+\frac{Q}{4}  }d\sigma ~  \left\| u\right\|_{ X_s(T)}^p\\\label{eq222222}
			& \qquad +(1+t)^{-p ( \frac{\gamma}{2}+\frac{Q}{4} )+ \frac{\gamma}{2}+\frac{Q}{4}  } \int_{\frac{t}{2}}^t(1+t-\sigma)^{-\frac{\gamma}{2}} \mathrm{~d} \sigma\|u\|_{X_s(T)}^p .
		\end{align}
		For $p>1+\frac{4}{Q+2 \gamma},$ the  integral given in (\ref{eq21})  converges  uniformly  over $\left[0, \frac{t}{2}\right]$. Now we will compute the integral given in (\ref{eq222222})  precisely.  Now 
		\begin{align}\label{integral}
			\int_{\frac{t}{2}}^t(1+t-\sigma)^{-\frac{\gamma}{2}} \mathrm{~d} \sigma \lesssim\left\{\begin{array}{ll}
				(1+t)^{1-\frac{\gamma}{2}} & \text { if } \gamma<2, \\
				\ln (\mathrm{e}+t) & \text { if } \gamma=2, \\
				1 & \text { if } \gamma>2.
			\end{array}\right.
		\end{align}
		Thus by  considering $p>1+\frac{4}{Q+2 \gamma}$ if $\gamma \leq 2$ and using the integral estimate (\ref{integral}), we  get
		\begin{align*}
			& (1+t)^{-\left(\frac{\gamma}{2}+\frac{Q}{4}\right) p+\frac{\gamma}{2}+\frac{Q}{4}} \int_{\frac{t}{2}}^t(1+t-\sigma)^{-\frac{\gamma}{2}}  {~d} \sigma \\
			& \leq (1+t)^{-1} \int_{\frac{t}{2}}^t(1+t-\sigma)^{-\frac{\gamma}{2}}  {~d} \sigma \lesssim  (1+t)^{-\frac{\gamma}{2}}.
		\end{align*}
		Again by considering  $p>1+\frac{2 \gamma}{Q+2 \gamma}$ if $\gamma>2$ and using the integral estimate (\ref{integral}),  a simple calculation yields
		\begin{align*}
			(1+t)^{-\left(\frac{\gamma}{2}+\frac{Q}{4}\right) p+\frac{\gamma}{2}+\frac{Q}{4}} \int_{\frac{t}{2}}^t(1+t-\sigma)^{-\frac{\gamma}{2}}  {~d} \sigma \lesssim(1+t)^{-\frac{\gamma}{2}} .
		\end{align*}
		Summarizing   all the  estimates, it holds that 
		\begin{align}\label{eq23}
			(1+t)^{\frac{\gamma}{2}} 	\|u^{\text{non}}(t, \cdot )\|_{L^2} \lesssim    \left\| u\right\|_{ X_s(T)}^p. \end{align}
		Now we will estimate  $\dot {H}_\mathcal{L}^s$-norm of the solution. Similar to the previous argument, we use   $\left(L^2 \cap \dot {H}_\mathcal{L}^{-\gamma}\right)-\dot 
		{H}_\mathcal{L}^s$ estimate (\ref{eq14}) in $\left[0, \frac{t}{2}\right]$ and the $L^2-\dot {H}_\mathcal{L}^s$ estimate (\ref{eq15}) in $\left[\frac{t}{2}, t\right]$ to estimate the solution itself in $\dot {H}_\mathcal{L}^s$. Thus 
		\begin{align}\nonumber
			\left\|u^{\mathrm{non}}(t, \cdot)\right\|_{\dot {H}_\mathcal{L}^s}
			&\leq   \int\limits_0^t \|    |u(\sigma,\cdot )|^p*_{(g)} E_1(t-\sigma, \cdot)  \|_{\dot {H}_\mathcal{L}^s} d\sigma\\\nonumber
			& \lesssim \int_0^{\frac{t}{2}}(1+t-\sigma)^{-\frac{(s+\gamma)}{2}} (1+\sigma)^{-p ( \frac{\gamma}{2}+\frac{Q}{4} )+ \frac{\gamma}{2}+\frac{Q}{4}  }d\sigma ~  \left\| u\right\|_{ X_s(T)}^p\\\nonumber
			& \qquad + \int_{\frac{t}{2}}^t (1+t-\sigma)^{-\frac{s}{2}} (1+\sigma)^{-p ( \frac{\gamma}{2}+\frac{Q}{4} )+\frac{Q}{4}  } \mathrm{~d} \sigma\|u\|_{X_s(T)}^p \\\nonumber
			& \lesssim(1+t)^{-\frac{(s+\gamma)}{2}} \int_0^{\frac{t}{2}}(1+\sigma)^{-p ( \frac{\gamma}{2}+\frac{Q}{4} )+ \frac{\gamma}{2}+\frac{Q}{4}  }d\sigma ~  \left\| u\right\|_{ X_s(T)}^p\\\label{eq22}
			& \qquad +(1+t)^{-p ( \frac{\gamma}{2}+\frac{Q}{4} )+\frac{Q}{4}  } \int_{\frac{t}{2}}^t(1+t-\sigma)^{-\frac{s}{2}} \mathrm{~d} \sigma\|u\|_{X_s(T)}^p .
		\end{align}
		For $p>1+\frac{4}{Q+2 \gamma}$,
		summarizing as previously, we have
		\begin{align}\label{eqq22}
			(1+t)^{\frac{s+\gamma}{2}}\left\|u^{ \text{non}}(t, \cdot)\right\|_{\dot {H}_\mathcal{L}^s} \lesssim\|u\|_{X_s(T)}^p .
		\end{align}
		Therefore from (\ref{eqq22}) and (\ref{eq23}), we obtain
		\begin{align*}
			\left\|u^{ \text{non}}(t, \cdot)\right\|_{X_s(T)} \lesssim\|u\|_{X_s(T)}^p,
		\end{align*}
		under the   conditions on $p$ as follows:
		\begin{itemize}
			\item $	1<	  p  
			\leq   \frac{Q}{Q-2s}$, from the application of the Gagliardo-Nirenberg inequality, 
			\item $p>1+\frac{4}{Q+2 \gamma}$ if $\gamma \leq 2$,
			\item $p>1+\frac{2 \gamma}{Q+2 \gamma}$ if $\gamma>2$,  from the integrability and decay estimates of solution.
			\item $p \geq 1+\frac{2 \gamma}{Q}$, from the application of the Gagliardo-Nirenberg inequality. 
		\end{itemize}
		First, we consider the case $\gamma>2.$ Then we observe that	
		$$
		\max \left\{1+\frac{2 \gamma}{Q+2 \gamma}, 1+\frac{2 \gamma}{Q}\right\}=1+\frac{2 \gamma}{Q} .
		$$
		Now for $\gamma\leq 2$, we have to compare between $1+\frac{4}{Q+2 \gamma}$  and $1+\frac{2 \gamma}{Q}$. Notice that  $1+\frac{4}{Q+2 \gamma}$  and $1+\frac{2 \gamma}{Q}$   intersects at a point $\tilde \gamma$, which is the positive root of the quadratic equation $ 2\gamma^2+Q\gamma-2Q=0$.  Moreover, it is easy to check that  the positive root $\tilde  \gamma<2$ for all $Q\geq 4.$ This shows that  
		\begin{itemize}
			\item $\max \left\{1+\frac{4}{Q+2 \gamma}, 1+\frac{2 \gamma}{Q} \right\}=1+\frac{4}{Q+2 \gamma} $ for $\gamma \leq \tilde{\gamma}$,
			\item $\max \left\{1+\frac{4}{Q+2 \gamma}, 1+\frac{2 \gamma}{Q} \right\}=1+\frac{2 \gamma}{Q} $ for  $\tilde{\gamma}<\gamma \leq 2$.
		\end{itemize}
		Finally, for any $\gamma\in (0, \frac{Q}{2})$, the condition for the exponent $p$ is reduced to    \begin{align*} 
			p\left\{\begin{array}{ll}
				>1+\frac{4}{Q+2 \gamma} =p_{\text {crit }}(Q, \gamma) & \text { if } \gamma \leq \tilde{\gamma}, \\
				\geq 1+\frac{2 \gamma}{Q} & \text { if } \gamma>\tilde{\gamma}. 
			\end{array}\right. 
		\end{align*}
		Furthermore, we have 
		\begin{align}\label{Nu}
			\|N u\|_{X_s(T)} \lesssim \left\|\left(u_{0}, u_{1}\right)\right\|_{ {  \mathcal{A}_{\mathcal{L}}^{s, -\gamma}}}+\|u\|_{X_s(T)}^{p}.
		\end{align}
		Now  we will calculate $
		\|N u-N \bar{u}\|_{X_s(T)}$. In order to do that, we first notice that
		$$
		\|N u-N \bar{u}\|_{X_s(T)}=\left\|\int_0^t E_1(t-\sigma, \cdot) *_{(g)}\left(|u(\sigma, \cdot)|^p-|\bar{u}(\sigma, \cdot)|^p\right)  {d} \sigma\right\|_{X_s(T)} .
		$$
		Similar to (\ref{eq20}), 	using the   Hardy-Littlewood-Sobolev inequality     (Theorem \ref{eq177}), we have
		\begin{align*}
			\left \| |u(\sigma, \cdot)|^p-|\bar{u}(\sigma, \cdot)|^p \right\|_{\dot  H^{-\gamma}_{\mathcal{L}}}
			\lesssim	\left \| |u(\sigma, \cdot)|^p-|\bar{u}(\sigma, \cdot)|^p\right\|_{L^{m}}
		\end{align*}
		with $\frac{1}{m}-\frac{1}{2}=\frac{\gamma}{Q}$ provided that  $0<\gamma <Q$ and $1<m<2$. 
		An application of Hölder's inequality yields
		\begin{align}\label{eq25}
			\left\||u(\sigma, \cdot)|^p-|\bar{u}(\sigma, \cdot)|^p\right\|_{L^m} \lesssim\|u(\sigma, \cdot)-\bar{u}(\sigma, \cdot)\|_{L^{m p}}\left(\|u(\sigma, \cdot)\|_{L^{m p}}^{p-1}+\|\bar{u}(\sigma, \cdot)\|_{L^{m p}}^{p-1}\right) .
		\end{align}
		Again,   the Gagliardo-Nirenberg inequality (Theorem \ref{eq16}) implies that 			\begin{align}\label{Final2}\nonumber
			& 	\left \| u(\sigma, \cdot)-\bar{u}(\sigma, \cdot)\right\|_{L^{mp}}\\\nonumber
			&\lesssim\| u(\sigma, \cdot)-\bar{u}(\sigma, \cdot)\|_{\dot {H}_\mathcal{L}^s(\mathbb{H}^n)}^{\theta_3} \|u(\sigma, \cdot)-\bar{u}(\sigma, \cdot)\|_{L^2(\mathbb{H}^n)}^{(1-\theta_3)}\\\nonumber
			&= (1+\sigma)^{-\left( \frac{\gamma}{2}+\frac{s\theta_3}{2}\right)  }  \left\{ (1+\sigma)^{\frac{(s+\gamma)}{2} } \|u(\sigma, \cdot)-\bar{u}(\sigma, \cdot)\|_{\dot {H}_\mathcal{L}^s(\mathbb{H}^n)} \right\}^{\theta_3}  \\\nonumber&\qquad \times \left\{ (1+\sigma)^{\frac{\gamma}{2} } \|u(\sigma, \cdot)-\bar{u}(\sigma, \cdot)\|_{L^2(\mathbb{H}^n)}\right\}^{(1-\theta_3)}\\
			&\lesssim (1+\sigma)^{ -\left( \frac{\gamma}{2}+\frac{s\theta_3}{2}\right) }  \left\| u(\sigma, \cdot)-\bar{u}(\sigma, \cdot)\right\|_{ X_s(T)},
		\end{align}
		and 
		\begin{align}\label{Final1}\nonumber
			&	 	\left \| u(\sigma, \cdot)\right\|_{L^{mp}}^{p-1}\\\nonumber
			&\lesssim\| u(\sigma, \cdot)\|_{\dot {H}_\mathcal{L}^s(\mathbb{H}^n)}^{(p-1)\theta_3} \|u(\sigma, \cdot)\|_{L^2(\mathbb{H}^n)}^{(p-1)(1-\theta_3)}\\\nonumber
			&= (1+\sigma)^{-\frac{(p-1)}{2}  (s\theta_3+\gamma)}  \left\{ (1+\sigma)^{\frac{(s+\gamma)}{2} } \|u(\sigma, \cdot)\|_{\dot {H}_\mathcal{L}^s(\mathbb{H}^n)} \right\}^{(p-1)\theta_3} \\\nonumber& \quad \times  \left\{ (1+\sigma)^{\frac{\gamma}{2} } \|u(\sigma, \cdot)\|_{L^2(\mathbb{H}^n)}\right\}^{(p-1)(1-\theta_3)}\\
			&\lesssim (1+\sigma)^{-\frac{(p-1)}{2}  (s\theta_3+\gamma)} \left\| u\right\|_{ X_s(T)}^{p-1},
		\end{align}
		for $\sigma\in [0, T]$ with $\theta_3=\frac{Q}{s}(\frac{1}{2}-\frac{1}{mp})\in [0, 1]$. 
		Thus from (\ref{eq25}), (\ref{Final2}), and (\ref{Final1}), we finally  have   
		\begin{align*}
			\nonumber   & \left \| |u(\sigma, \cdot)|^p-|\bar{u}(\sigma, \cdot)|^p \right\|_{\dot  H^{-\gamma}_{\mathcal{L}}}  \\
			& 
			\lesssim (1+\sigma)^{-\left(\frac{p-1}{2}+\frac{1}{2}\right) (\gamma+{Q}(\frac{1}{2}-\frac{1}{mp}))} \left\| u(\sigma, \cdot)-\bar{u}(\sigma, \cdot)\right\|_{ X_s(T)}\left(\|u\|_{X_s(T)}^{p-1}+\|\bar{u}\|_{X_s(T)}^{p-1}\right) \\
			& 
			\lesssim (1+\sigma)^{- \frac{p}{2}(1-\frac{1}{p})(\gamma+\frac{Q}{2}) } \left\| u(\sigma, \cdot)-\bar{u}(\sigma, \cdot)\right\|_{ X_s(T)}\left(\|u\|_{X_s(T)}^{p-1}+\|\bar{u}\|_{X_s(T)}^{p-1}\right)\\
			&
			\lesssim(1+\sigma)^{-\left(\frac{\gamma}{2}+\frac{Q}{4}\right) p+\frac{\gamma}{2}+\frac{Q}{4}} \left\| u(\sigma, \cdot)-\bar{u}(\sigma, \cdot)\right\|_{ X_s(T)}\left(\|u\|_{X_s(T)}^{p-1}+\|\bar{u}\|_{X_s(T)}^{p-1}\right). \nonumber
		\end{align*}
		Now using the given range of $p$, we get   
		\begin{align}\label{Banach}
			\|N u-N \bar{u}\|_{X_s(T)} \leq C \|u-\bar{u}\|_{X_s(T)}\left(\|u\|_{X_s(T)}^{p-1}+\|\bar{u}\|_{X_s(T)}^{p-1}\right),
		\end{align}
		for any $u, \bar u \in X_s(T)$ and for some $C>0.$
		Also from (\ref{Nu}), we obtain 	
		\begin{align}\label{Final banach}
			\|N u\|_{X_s(T)} \leq  D\left\|\left(u_{0}, u_{1}\right)\right\|_{ {  \mathcal{A}_{\mathcal{L}}^{s, -\gamma}}}+D\|u\|_{X_s(T)}^{p},
		\end{align}  
		for some $D>0$ with initial data space ${  \mathcal{A}_{\mathcal{L}}^{s, -\gamma}}: =(H^s_{\mathcal{L}}\cap \dot  H^{-\gamma}_{\mathcal{L}}) \times (L^2\cap \dot  H^{-\gamma}_{\mathcal{L}})$. 
		
Therefore, by Banach's fixed point theorem,  there exists a uniquely determined fixed point $u^*$ of the operator $N$, which means $u^*=Nu^* \in X_s(T)$  for all positive $T$.  This fixed point $u^*$ will be  our mild solution to (\ref{eq0010})  on $[0, T]$. This implies that there exists a global (in-time) small data Sobolev solution $u^*$ of the equation $ u^*=Nu^* $ in $ X_s(T)$, which also gives the solution to the semilinear damped wave equation (\ref{eq0010})  and this completes the proof of the theorem.   \end{proof}


	\section{Blow-up analysis of solutions: a test function method}\label{sec5}
%
%
	
	\begin{proof}[Proof of Theorem \ref{blow-up}]
		In order to prove this result, 	we apply the so-called test function method. By contradiction, we assume that there exists a global in time weak solution $u$ to (\ref{eq0010}).  Let us consider two bump functions $\alpha\in \mathcal{C}_0^{\infty}(\mathbb{R}^n)$ and  $\beta\in \mathcal{C}_0^{\infty}(\mathbb{R})$ such that 
		$$\text{$\alpha=1$ on $B_n\left(\frac{1}{2}\right)$ with     $\operatorname{supp} \alpha \subset B_n(1)$
		}$$ and  $$\text{$\beta=1$ on $\left[-\frac{1}{4}, \frac{1}{4}\right]$ with  $\operatorname{supp} \beta \subset(-1,1)$}.$$ If $R>1$ is a parameter, then, we define the test function $\varphi_R \in \mathcal{C}_0^{\infty}\left([0, \infty) \times \mathbb{R}^{2 n+1}\right)$ with separate variables as
		$$
		\varphi_R(t, x, y, \tau) \doteq \beta\left(\frac{t}{R^2}\right) \alpha\left(\frac{x}{R}\right) \alpha\left(\frac{y}{R}\right) \beta\left(\frac{\tau}{R^2}\right), \quad  (t, x, y, \tau) \in[0, \infty) \times \mathbb{R}^{2 n+1}.
		$$
		Let $\mathcal{D}_R:= B_{n}(R) \times B_{n}(R) \times\left[-R^{2}, R^{2}\right]$.  Then using support of $\alpha$ and $\beta$, we can conclude that $\operatorname{supp} \varphi_{R} \subset\left[0, R^{2}\right] \times \mathcal{D}_R$. 
		Furthermore,  a simple calculation yields \begin{align*}
			\partial_t \varphi_R(t, x, y, \tau)&=R^{-2} \beta^{\prime}\left(\frac{t}{R^2}\right) \alpha\left(\frac{x}{R}\right) \alpha\left(\frac{y}{R}\right) \beta\left(\frac{\tau}{R^2}\right), \\
			\partial_t^2 \varphi_R(t, x, y, \tau)&=R^{-4} \beta^{\prime \prime}\left(\frac{t}{R^2}\right) \alpha\left(\frac{x}{R}\right) \alpha\left(\frac{y}{R}\right) \beta\left(\frac{\tau}{R^2}\right).
		\end{align*} Moreover, from the following expression of the sublaplacian
		$$\mathcal{L}=\Delta+\frac{1}{4}(|x|^2+|y|^2) \partial_\tau^2+\sum_{j=1}^n\left(x_j \partial_{y_j \tau}^2-y_j \partial_{x_j \tau}^2\right),$$ we have 
		\begin{align*}
			\mathcal{L}\varphi_{R}(t, x, y, \tau)&= R^{-2} \beta\left(\frac{t}{R^{2}}\right) \Delta \alpha\left(\frac{x}{R}\right) \alpha\left(\frac{y}{R}\right) \beta\left(\frac{\tau}{R^{2}}\right)\\&+R^{-2} \beta\left(\frac{t}{R^{2}}\right) \alpha\left(\frac{x}{R}\right) \Delta \alpha\left(\frac{y}{R}\right) \beta\left(\frac{\tau}{R^{2}}\right) \\
			& +R^{-3} \sum_{j=1}^{n} x_{j} \beta\left(\frac{t}{R^{2}}\right) \alpha\left(\frac{x}{R}\right) \partial_{j} \alpha\left(\frac{y}{R}\right) \beta^{\prime}\left(\frac{\tau}{R^{2}}\right) \\
			& -R^{-3} \sum_{j=1}^{n} y_{j} \beta\left(\frac{t}{R^{2}}\right) \partial_{j} \alpha\left(\frac{x}{R}\right) \alpha\left(\frac{y}{R}\right) \beta^{\prime}\left(\frac{\tau}{R^{2}}\right) \\
			& +\frac{1}{4} R^{-4}\left(|x|^{2}+|y|^{2}\right) \beta\left(\frac{t}{R^{2}}\right) \alpha\left(\frac{x}{R}\right) \alpha\left(\frac{y}{R}\right) \beta^{\prime \prime}\left(\frac{\tau}{R^{2}}\right),
		\end{align*}
		where $\Delta$ denotes the Laplace operator on $\mathbb{R}^{n}$.
		
		From the condition $0 \leq \alpha, \beta \leq 1$, we   can get    $\alpha \leq \alpha^{\frac{1}{p}}$ and $\beta \leq \beta^{\frac{1}{p}}$. Moreover, using the following bounds
		$$
		\begin{aligned}
			&	\left|\partial_j \alpha\right| \lesssim \alpha^{\frac{1}{p}} \quad \quad\text { for any } 1 \leq j \leq n, \\&\left|\partial_j \partial_k \alpha\right| \lesssim \alpha^{\frac{1}{p}} \quad \text { for any } 1 \leq j, k \leq n, \\
			&	\left|\beta^{\prime}\right|  \lesssim \beta^{\frac{1}{p}}, \quad\left|\beta^{\prime \prime}\right| \lesssim \beta^{\frac{1}{p}} ,
		\end{aligned}
		$$
		we immediately get
		$$
		\begin{aligned}
			\left|\partial_{t} \varphi_{R}\right| & \lesssim R^{-2}\left(\varphi_{R}\right)^{\frac{1}{p}}, \\
			\left|\partial_{t}^{2} \varphi_{R}\right| & \lesssim R^{-4}\left(\varphi_{R}\right)^{\frac{1}{p}} \lesssim R^{-2}\left(\varphi_{R}\right)^{\frac{1}{p}}, \text{and}\\
			\left|\mathcal{L}\varphi_{R}\right| & \lesssim R^{-2}\left(\varphi_{R}\right)^{\frac{1}{p}} .
		\end{aligned}
		$$
		Let
		$$I_R=	\int_0^T \int_{\HH}|u(t, \eta)|^p \varphi_R(t, \eta) \;d \eta \;d t.$$
		Now  from (\ref{2999}) with keeping in mind the fact  $\partial_t \varphi_R \leq 0$    {  (by choosing the factor $\beta$  appropriately, for example, let $\beta$ be even and non-increasing in $[0, \infty)$)}, an application of  H\"older's and Young’s inequalities  yields
		\begin{align}\nonumber\label{eq30}
			&	\int_0^T \int_{\HH}|u(t, g)|^p \varphi_R(t, g) \;d g \;d t+ \varepsilon \int_{\mathbb{H}^n}\left(u_0(g)+u_1(g)\right) \varphi_R(0, g) \;d g\\\nonumber
			&\leq \int_0^T \int_{\HH} |u(t, g)|\left(|\partial_t^2 \varphi_R(t, g)|+|\mathcal{L}\varphi_R(t, g)|+|\partial_t \varphi_R(t, g)|\right) \;d g \;d t\\\nonumber
			&\lesssim R^{-2} \int_0^T \int_{\HH} |u(t, g)||\varphi_R(t, g)|^{\frac{1}{p}} \;dg\;d t\\
			& \leq C R^{-2}\left(\int_{0}^{\infty} \int_{\mathbb{H}^{n}}|u(t, g)|^{p} \varphi_{R}(t, g) \;d g \;d t\right)^{\frac{1}{p}}\left(\int_{\left[0, R^{2}\right] \times \mathcal{D}_R } dt\;dg \right)^{\frac{1}{p^{\prime}}}\\\nonumber
			& = CR^{-2} I_R^{\frac{1}{p}} R^{\frac{2+Q}{p'}}=C R^{-2+\frac{Q+2}{p'}}I_R^{\frac{1}{p}}\\\nonumber
			&\leq \frac{I_R}{p}+\frac{ C R^{Q+2-2p'}}{p'},
		\end{align}
		where  in (\ref{eq30}), we used the fact that  $\operatorname{meas} (\mathcal{D}_R) \approx R^Q$. 
		Therefore 
		\begin{align}\label{eq31}
			\int_0^T \int_{\HH}|u(t, g)|^p \varphi_R(t, g) \;d g \;d t\leq  \frac{I_R}{p}+\frac{CR^{Q+2-2p'}}{p'}- \varepsilon \int_{\mathbb{H}^n}(u_0(g)+u_1(g) ) \varphi_R(0, g) \;d g.
		\end{align} 	
		Now using our assumption (\ref{eq32}), it remains to find estimate for $$	\int_{\HH}\left(u_{0}(g)+u_{1}(g)\right) \varphi_{R}(0, g)dg.$$ Before that, we have to justify that the set of all initial data $\left(u_0, u_1\right)$   in $ \dot {H}_\mathcal{L}^{-\gamma} \times  \dot H_\mathcal{L}^{-\gamma}$  with the assumptions (\ref{eq32}), is non-empty.  For that,  we denote the  set $\mathcal{D}_{Q, \gamma}$ as 
		\begin{align*}
			\mathcal{D}_{Q, \gamma}&:=\{\left(u_{0}, u_{1}\right) \in {  L^1_{\text{loc}}(\mathbb{H}^n) \times L^1_{\text{loc}}(\mathbb{H}^n)}: u_0(g)+u_1(g) \geq C_0 \langle g \rangle^{-Q\left(\frac{1}{2}+\frac{\gamma}{Q}\right)}(\log (e+|g|))^{-1}\},
		\end{align*}
		then first we claim that $\mathcal{D}_{Q, \gamma} \cap\left(L^{\frac{2Q}{Q+2\gamma}} \times L^{\frac{2Q}{Q+2\gamma}}  \right) \neq \emptyset$ for $\frac{2Q}{Q+2\gamma}>1$. In particular, consider the functions	  
		\begin{align*}
			u_{0}(g)=u_{1}(g)&=  C_0 \langle g \rangle^{-Q\left(\frac{1}{2}+\frac{\gamma}{Q}\right)}(\log (e+|g|))^{-1}.
		\end{align*}
		Then using the polar decomposition  {  (see Proposition (1.15) in \cite{Folland})}  for $\mathbb{H}^n$, we have 
		$$
		\begin{aligned}
			\int_{\HH}|u_{0}(g)| ^{\frac{2Q}{Q+2\gamma}}  {d} g& =  C_0^{\frac{2Q}{Q+2\gamma}}  
			\int_{\HH}\langle g \rangle^{-Q\left(\frac{1}{2}+\frac{\gamma}{Q}\right)\times \frac{2Q}{Q+2\gamma}}(\log (e+|g|))^{-\frac{2Q}{Q+2\gamma}}\;dg\\ 
			& =  C_0^{\frac{2Q}{Q+2\gamma}}  \int_{\HH}\langle g \rangle^{-Q }(\log (e+|g|))^{-\frac{2Q}{Q+2\gamma}}\;dg\\ 
			&  \lesssim    C_0^{\frac{2Q}{Q+2\gamma}} \int_{0}^{\infty} \langle r \rangle^{-Q} r^{Q-1}(\log (\mathrm{e}+ r))^{-\frac{2Q}{Q+2\gamma}}dr\\
			& =  C_0^{\frac{2Q}{Q+2\gamma}} \int_{0}^{\infty} \langle r \rangle^{-1}  (\log (\mathrm{e}+ r))^{-\frac{2Q}{Q+2\gamma}}dr<\infty,
		\end{aligned}
		$$ for $\frac{2Q}{Q+2\gamma}>1$.
		This implies that  $u_{0}, u_{1} \in L^{\frac{2Q}{Q+2\gamma}} $. Since $\frac{Q+2 \gamma}{2 Q}-\frac{1}{2}=\frac{\gamma}{Q}$ with $\gamma \in\left(0, \frac{Q}{2}\right)$, according to the Hardy-Littlewood-Sobolev inequality (Theorem \ref{eq177}), we have $ L^{\frac{2Q}{Q+2\gamma}}  \subset \dot {H}_\mathcal{L}^{-\gamma}$.  Therefore, in every instance, we can deduce that    	
		$$
		\mathcal{D}_{Q, \gamma} \cap\left(\dot {H}_\mathcal{L}^{-\gamma}\times \dot {H}_\mathcal{L}^{-\gamma} \right) \neq \emptyset, \quad \text{for} ~\gamma \in\left(0, \frac{Q}{2}\right).
		$$	
		Now from our assumption (\ref{eq32}), for $R \gg 1$,   we obtain 
		\begin{align}\label{eq33}\nonumber
			&	\int_{\HH}\left(u_{0}(g)+u_{1}(g)\right) \varphi_{R}(0, g)\;dg \\\nonumber
			& \geq C_0 \int_{\mathcal{D}_R}    \langle g \rangle^{-Q\left(\frac{1}{2}+\frac{\gamma}{Q}\right)}(\log (e+|g|))^{-1} \,dg \\
			& \geq C_0 R^{Q-Q(\frac{1}{2}+\frac{\gamma}{Q} )}(\log R)^{-1}= C_0 R^{\frac{Q}{2}-\gamma} (\log R)^{-1}.
		\end{align}	
		Thus from (\ref{eq31}) and (\ref{eq33}), we have
		\begin{align*} 
			I_R=	\int_0^T \int_{\HH}|u(t, g)|^p \varphi_R(t, g) \;d g \;d t\leq  \frac{I_R}{p}+\frac{CR^{Q+2-2p'}}{p'}- \varepsilon C_0 R^{\frac{Q}{2}-\gamma}(\log R)^{-1} ,
		\end{align*} 
		which further implies that
		\begin{align}\label{eq34} 
			0\leq \Big(1-\frac{1}{p}\Big) I_R   \leq   \frac{CR^{Q+2-2p'}}{p'}- \varepsilon C_0 R^{\frac{Q}{2}-\gamma}(\log R)^{-1}.
		\end{align} 
		By the assumption $p \in \left(1, 1+\frac{4}{Q+2\gamma}\right),$ we have $Q+2-2p'-\frac{Q}{2}+\gamma <0$ and therefore,  it is easy to see  that 
		\begin{equation}\label{bhah}
			R^{Q+2-2p'-\frac{Q}{2}+\gamma} \log R < \frac{\varepsilon C_0 p'}{C},
		\end{equation} for $R \gg 1.$ 
		
		Hence,  from \eqref{eq34} and \eqref{bhah}, we obtain 
		
		$$0\leq \Big(1-\frac{1}{p}\Big) I_R   \leq   \frac{CR^{Q+2-2p'}}{p'}- \varepsilon C_0 R^{\frac{Q}{2}-\gamma}(\log R)^{-1}<0,$$
		which is a contradiction. This completes the proof of the blow-up result.
		
		Since the scaling  factor $R^2$, appeared   in the bump function $\beta$ with respect to the time varibale   has to be dominated by the lifespan $T_{w,\varepsilon}$ of the weak solution  in order to guarantee $\varphi_R \in \mathcal{C}_0^{\infty}([0, T) \times \mathbb{H}^n)$,  to generate upper bound estimate for the lifespan, we consider $R \uparrow T_{w,\varepsilon}^{ \frac{1}{2}}$ in (\ref{eq34}).  As a result, a contradiction (similar to \eqref{eq34}) exists if $$	\frac{T_{w,\varepsilon}^{\frac{Q+2}{2}-p'}}{p'}<C \varepsilon T_{w,\varepsilon}^{\frac{Q-2\gamma}{4}}(\log T_{w,\varepsilon})^{-1}.	$$		In other words		$$	T_{w,\varepsilon} \leq C\varepsilon^{-\left(\frac{1}{p-1}-\left(\frac{Q}{4}+\frac{\gamma}{2}\right)\right)^{-1}},	$$ which is the desired lifespan for the local in time weak solutions to the Cauchy problem  (\ref{eq0010}), where the constant  $C$ is positive and independent of $\varepsilon $ and $p $.

	\end{proof}

	\section{Sharp lifespan estimates of solutions}\label{sec6}
	The aim of this section is to find the lower bound estimates of lifespan.    We will make use of certain notations, specifically, the evolution space $X_1(T)$ and the data space $\mathcal{A}_1^{\mathcal{L}}$, introduced in Section \ref{sec1}. As we are now considering the case where $1 < p < p_{\text{crit}}(Q, \gamma)$, in order to obtain lower limit estimates for the lifespan,  we will use a different nonlinear inequality instead of (\ref{2number100}).
	
	\begin{proof}[Proof of Theorem \ref{lower bound}]
		Here we note that using the definition of mild solutions (\ref{f2inr}), we can estimate and express local-in-time mild solutions directly. Proceeding in a similar way as to establish  (\ref{eq21})-(\ref{eq222222}) in Section \ref{sec4}, we   get
		\begin{align}\label{eq27}\nonumber
			(1+t)^{\frac{\gamma}{2}}\|u(t, \cdot)\|_{L^2} \lesssim & \varepsilon\left\|\left(u_0, u_1\right)\right\|_{\mathcal{A}_1^{\mathcal{L}}}+   \int_0^{\frac{t}{2}}(1+\sigma)^{-p ( \frac{\gamma}{2}+\frac{Q}{4} )+ \frac{\gamma}{2}+\frac{Q}{4}  }d\sigma ~  \left\| u\right\|_{ X_1(T)}^p\\ 
			&   +(1+t)^{-p ( \frac{\gamma}{2}+\frac{Q}{4} )+ \frac{\gamma}{2}+\frac{Q}{4}  +1}  \|u\|_{X_1(T)}^p
		\end{align}
		because of $\gamma \in (0, 2 )$, where we restricted   $p$ as 
		$$
		1+\frac{2 \gamma}{Q} \leq p
		\leq \frac{Q}{Q-2},
		$$
		for the application of the Gagliardo-Nirenberg inequality.  
		Since $
		1<p<p_{\text {crit }}(Q, \gamma)=1+\frac{4}{Q+2\gamma}$, this implies that  $$-p \left( \frac{\gamma}{2}+\frac{Q}{4} \right)+   \frac{\gamma}{2}+\frac{Q}{4} > -  \frac{2\gamma+Q}{4}   \left(1+\frac{4}{Q+2\gamma}\right) + \frac{2\gamma+Q}{4}=-1.
		$$
		Therefore, from (\ref{eq27}), we have 
		\begin{align}\label{eq28}
			(1+t)^{\frac{\gamma}{2}}\|u(t, \cdot)\|_{L^2} \lesssim \varepsilon\left\|\left(u_0, u_1\right)\right\|_{\mathcal{A}_1^{\mathcal{L}}}+(1+t)^{-p\left(\frac{\gamma}{2}+\frac{Q}{4}\right) +\frac{Q}{4}+\frac{\gamma}{2}+1}\|u\|_{X_1(T)}^p .
		\end{align}
		Choosing $s=1$ in  (\ref{eq22}) leads to
		\begin{align}\label{eq29}
			(1+t)^{\frac{1+\gamma}{2}}	\left\|u(t, \cdot)\right\|_{\dot {H}_\mathcal{L}^1}
			\lesssim \varepsilon\left\|\left(u_0, u_1\right)\right\|_{\mathcal{A}_1^{\mathcal{L}}}
			+(1+t)^{-p ( \frac{\gamma}{2}+\frac{Q}{4} )+\frac{Q}{4}  +\frac{\gamma}{2}+1} \|u\|_{X_1(T)}^p .
		\end{align}
		Thus from (\ref{eq28}) and (\ref{eq29}), we can  say that 
		\begin{align}\label{eq2999}
			\|u\|_{X_1(T)} \leq  \varepsilon C+D (1+t)^{\alpha(p, Q, \gamma)}\|u\|_{X_1(T)}^p,
		\end{align}
		where $\alpha(p, Q, \gamma):=-p ( \frac{\gamma}{2}+\frac{Q}{4} )+\frac{Q}{4}  +\frac{\gamma}{2}+1\in (0,1)$, and $C , D$ are two positive constants independent of $\varepsilon$ and $T$.
		
		Now 	let us now introduce 
		$$
		T^*:=\sup \left\{T \in\left[0, T_{m,\varepsilon}\right) \text { such that } \mathcal{G}(T):=\|u\|_{X_1(T)} \leq M \varepsilon\right\}
		$$
		with a large enough constant $M>0$, which we shall select later. As a result, from (\ref{eq2999}) and the   fact that  $\mathcal{G}\left(T^*\right) \leq M \varepsilon$,	  we obtain 
		\begin{align}\label{eq299}\nonumber
			\mathcal{G}\left(T^*\right)&=\|u\|_{X_1(T^*)} \\\nonumber
			& \leq \left(\varepsilon C+D\left(1+T^*\right)^{\alpha(p, Q, \gamma)} M^p \varepsilon^{p}\right)\\ 
			& = \varepsilon \left( C+D\left(1+T^*\right)^{\alpha(p, Q, \gamma)} M^p \varepsilon^{p-1}\right)	
			<M \varepsilon
		\end{align}
		for large $M$,   provided  {  $2C<M$  and 
		$$
		2D\left(1+T^*\right)^{\alpha(p,Q, \gamma)} M^{p-1} \varepsilon^{p-1}<1 .
		$$}
		Its important to note that the function $\mathcal{G}=\mathcal{G}(T)$ is continuous for  $T \in\left(0, T_{m,\varepsilon}\right)$. Nonetheless, (\ref{eq299}) demonstrates that there exists a time $T_0 \in\left(T^*, T_{m,\varepsilon} \right)$ such that $\mathcal{G}\left(T_0\right) \leq M \varepsilon$, which contradicts to the assumption that  $T^*$ is the supremum.To put it another way, we must enforce the following condition:
		$$
		D\left(1+T^*\right)^{\alpha(p, Q, \gamma)} M^{p-1} \varepsilon^{p-1} \geq 1.
		$$
		This implies that 
		$$
		1+T^*   \geq D^{-\frac{1}{\alpha(p, Q, \gamma)}} M^{-\frac{p-1}{\alpha(p, Q, \gamma)}}\varepsilon^{-\frac{p-1}{\alpha(p, Q, \gamma)}}.
		$$
		This implies that we can deduce the blow-up time estimate  as 
		$$
		T_{m,\varepsilon} \geq D \varepsilon^{-\left(\frac{1}{p-1}-\left(\frac{Q}{4}+\frac{\gamma}{2}\right)\right)^{-1}}.
		$$
		This completes the proof of the theorem regarding lower bound estimates of lifespan. \end{proof}
	
{  \begin{rem}
	 We believe that   the results obtained 
	 in this paper on the Heisenberg group   can be generalized on a general stratified Lie group $\mathbb{G}$, where the formula for $p_{\text {crit }}(Q, \gamma)$ will be the same as in the case of Heisenberg group, with $Q$ now the homogeneous dimension of $\mathbb{G}$. The global existence result can be proved by following the approach of   \cite[Section 4]{30},   while the blow-up result can be proved using again the test function method considered in   \cite{Lie group}.

\end{rem}}
	%
	%
	%
	%
	%
	\section*{Acknowledgement}
{  	The authors wish to thank the anonymous referee for his/her helpful comments and suggestions that helped to improve the quality of the paper.} AD is supported by  Core Research Grant, RP03890G,  Science and Engineering
	Research Board (SERB), DST,  India. VK and MR are supported by the FWO Odysseus 1 grant G.0H94.18N: Analysis and Partial Differential Equations, the Methusalem program of the Ghent University Special Research Fund (BOF) (Grant number 01M01021), and by FWO Senior Research Grant G011522N.  
	MR is also supported by EPSRC grants EP/R003025/2 and EP/V005529/1. SSM is supported by  post-doctoral fellowship at the  Indian Institute of Technology Delhi, India. SSM also thanks Ghent Analysis \& PDE Center of Ghent University for the financial support of his visit to Ghent University during which this work has been completed.

	%
	%
	%
	%
	%
	%
	%
	%
	%
	%
	%

\end{document}